\documentclass[12pt]{amsart}

\usepackage{amssymb, amscd, txfonts}
\usepackage{graphicx}

 
\numberwithin{equation}{section}

\sloppy

\newtheorem{theorem}{Theorem}[section]
\newtheorem{proposition}[theorem]{Proposition}
\newtheorem{lemma}[theorem]{Lemma}
\newtheorem{corollary}[theorem]{Corollary}

\theoremstyle{definition}

\newtheorem{example}[theorem]{Example}

\theoremstyle{remark}
\newtheorem{remark}[theorem]{Remark}


\newcommand{\Z}{\mathbb{Z}}
\newcommand{\Q}{\mathbb{Q}}
\newcommand{\R}{\mathbb{R}}
\newcommand{\C}{\mathbb{C}}
\newcommand{\HH}{\mathbb{H}}
\newcommand{\proj}{{\mathbb P}}
\newcommand{\QZ}{\mathbb{Q}/\mathbb{Z}}
\newcommand{\e}{{\mathbf e}}
\newcommand{\emu}{{\mathbf e}_{\mu}}
\newcommand{\elambda}{{\mathbf e}_{\lambda}}

\newcommand{\SL}{{\rm SL}_2(\mathbb{Z})}
\newcommand{\Mp}{{\rm Mp}_2(\mathbb{Z})}
\newcommand{\MGd}{M\Gamma_{0}(d)}
\newcommand{\Or}{{\rm O}^+}
\newcommand{\GL}{\Gamma_{L}}
\newcommand{\rk}{{\rm rk}}
\newcommand{\DL}{\mathcal{D}_{L}}
\newcommand{\DM}{\mathcal{D}_{M}}
\newcommand{\OAL}{{\rm O}(A_{L})}
\newcommand{\pull}{\uparrow_{L}^{L'}}
\newcommand{\push}{\downarrow_{L}^{L'}}
\newcommand{\pullA}{\uparrow_{I}^{A'}}
\newcommand{\pushA}{\downarrow_{I}^{A'}}
\newcommand{\ind}{{\rm ind}_{L}}

\newcommand{\ThetaK}{\Theta_{K}}

\newcommand{\divi}{{\rm div}}

\begin{document}

\title[]{Quasi-pullback of Borcherds products}
\author[]{Shouhei Ma}
\thanks{Supported by JSPS KAKENHI 15H05738 and 17K14158.} 
\address{Department~of~Mathematics, Tokyo~Institute~of~Technology, Tokyo 152-8551, Japan}
\email{ma@math.titech.ac.jp}
\maketitle

\begin{abstract}
Quasi-pullback of Borcherds products is an operation of renormalized restriction. 
It produces a meromorphic modular form on 
a lower dimensional symmetric domain which is again a Borcherds product. 
We give an explicit formula for the weakly holomorphic modular form of Weil representation type  
whose Borcherds lift is the quasi-pullback of the given Borcherds product. 
\end{abstract}


\section{Introduction}

Let $L$ be an even lattice of signature $(2, b)$. 
In \cite{Bo95}, \cite{Bo98}, Borcherds discovered a method for constructing 
meromorphic modular forms on the symmetric domain ${\DL}$ attached to $L$ 
whose divisor is a linear combination of Heegner divisors. 
His construction lifts weakly holomorphic modular forms $f$ of one variable 
with values in the Weil representation $\rho_{L}$ of $L$,  
and the principal part of $f$ determines 
the divisor and the weight of the resulting modular form $\Psi_{L}(f)$ on ${\DL}$. 
This orthogonal modular form $\Psi_{L}(f)$ is called the \textit{Borcherds product} associated to $f$. 

In some applications of Borcherds products, an operation called \textit{quasi-pullback}, 
first introduced by Borcherds in \cite{Bo95}, \cite{B-K-P-SB}, has played an important role. 
Let $M$ be a primitive sublattice of $L$ of signature $(2, b')$. 
If $\Psi$ is a Borcherds product on ${\DL}$, its quasi-pullback to ${\DM}$ is defined by 
first dividing $\Psi$ by zeros and poles containing ${\DM}$, 
and then restricting the resulting form to ${\DM}$. 
This produces a modular form on ${\DM}$ whose divisor and weight can be 
determined from those of $\Psi$ and the arithmetic information of the embedding $M\subset L$. 
If $\mathcal{D}_{M}$ is contained neither in the zero divisor nor in the pole divisor of $\Psi$, 
this is ordinary restriction. 
Quasi-pullback constructions have been applied to various problems, such as 
\begin{itemize}
\item the height formula for the Weyl vectors of Borcherds products (\cite{Bo95}), 
\item Borcherds lift for anisotropic lattices (\cite{Bo98}, see also \cite{H-MP}), 
\item the quasi-affineness of the moduli spaces of $K3$ surfaces (\cite{B-K-P-SB}), 
\item the Kodaira dimension of modular varieties (\cite{Ko99}, \cite{G-H-S07}, \cite{G-H-S13} et al),  
\item the analytic torsion of $K3$ surfaces with involutions (\cite{Yo98}, \cite{Yo13}), and  
\item generalized Kac-Moody algebras (\cite{G-N16}).  
\end{itemize}
In many of these examples, $L$ is the even unimodular lattice $II_{2,26}$ of signature $(2, 26)$ 
and $\Psi$ is the Borcherds form $\Phi_{12}$ constructed from $f=1/\Delta$ (\cite{Bo95}). 

Quasi-pullback of a Borcherds product is again a Borcherds product, 
at least when ${\rm rk}(M)\geq5$. 
In many cases this follows from Bruinier's converse theorem \cite{Br}, \cite{Br14}, 
and we show that this is always the case 
(provided that the Koecher principle holds for $\mathcal{D}_{M}$).  
Our main result is an explicit formula for the modular form of type $\rho_{M}$ 
whose Borcherds lift is the quasi-pullback of the given Borcherds product. 
Recall that the \textit{Witt index} of $M$ is 
the maximal rank of an isotropic sublattice of $M$. 

\begin{theorem}\label{main}
Let $\Psi_{L}(f)$ be the Borcherds product on ${\DL}$ associated to 
a weakly holomorphic modular form $f$ of type $\rho_{L}$. 
Let $K(-1)$ be a primitive negative-definite sublattice of $L$, with $K$ positive-definite.  
Assume that the Witt index of $M=K(-1)^{\perp}\cap L$ is smaller than ${\rm rk}(M)-2$. 
Then, up to a constant, 
the quasi-pullback of $\Psi_{L}(f)$ to ${\DM}$ is the Borcherds lift of 
the weakly holomorphic modular form of type $\rho_{M}$ defined by 
\begin{equation*}\label{eqn:main intro}
g = \langle f{\pull}, \: \Theta_{K} \rangle. 
\end{equation*}  
Here $L'=M\oplus K(-1)$, 
$\uparrow_{L}^{L'}$ is the pullback operation defined in Equation \eqref{eqn:pull and push}, 
$\Theta_{K}$ is the $\rho_{K}$-valued theta series of $K$, 
and $\langle \cdot, \Theta_{K} \rangle$ is the $\Theta$-contraction 
defined in Equation \eqref{eqn:Theta contraction}. 
\end{theorem}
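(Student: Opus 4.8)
The plan is to compare the quasi-pullback $Q(\Psi_{L}(f))$ directly with the Borcherds lift $\Psi_{M}(g)$ of the explicit form $g=\langle f{\pull},\,\Theta_{K}\rangle$, and to prove they coincide up to a nonzero constant by matching weights and divisors. Both are meromorphic modular forms on $\DM$, so their quotient is a meromorphic modular form of weight $0$; if in addition they have the same weight and the same divisor, the quotient has empty divisor and, under the hypothesis that the Witt index of $M$ is smaller than $\rk(M)-2$, the Koecher principle forces it to be constant. I therefore do not need Bruinier's converse theorem for existence: the form $g$ is produced by hand and the equality is verified by comparing these two invariants.

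First I would confirm that $g$ is a weakly holomorphic modular form of type $\rho_{M}$ of the correct input weight. This is formal from the definitions: $f{\pull}$ is of type $\rho_{L'}=\rho_{M}\otimes\rho_{K(-1)}$ and weight $1-b/2$, while $\Theta_{K}$ is of type $\rho_{K}$ and weight $\rk(K)/2$; since $\rho_{K(-1)}$ is dual to $\rho_{K}$, the $\Theta$-contraction pairs these two tensor factors invariantly under $\Mp$ and outputs a form of type $\rho_{M}$ of weight $(1-b/2)+\rk(K)/2=1-b'/2$, exactly the weight of a Borcherds input for $\DM$. Weak holomorphy is inherited because $\Theta_{K}$ is holomorphic.

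The core of the argument is the divisor computation. Since $K$ is positive definite, $\Theta_{K}$ has only non-negative $q$-exponents, so for $n<0$ the $\mathbf{e}_{\mu}$-coefficient of $g$ equals
\begin{equation*}
\sum_{\lambda\in K^{*}}\big(f{\pull}\big)_{\mu,\bar\lambda}\!\left(n-\frac{\lambda^{2}}{2}\right),
\end{equation*}
where $\bar\lambda\in A_{K}$ is the class of $\lambda$ and $\big(f{\pull}\big)_{\mu,\bar\lambda}$ denotes a Fourier coefficient of the $(\mu,\bar\lambda)$-component; by the definition of $\pull$, this coefficient is read off from $f$ through the natural correspondence between the discriminant forms of $L'$ and $L$ arising from $L'=M\oplus K(-1)\subset L$. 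On the geometric side, the divisor of $Q(\Psi_{L}(f))$ is the restriction to $\DM$ of those Heegner divisors $H_{v}$ of $\DL$ with $v\not\perp M$, the components $H_{v}\supset\DM$ (equivalently $v\in K(-1)^{*}$) having been divided out. Writing $v=\mu+w$ with $\mu\in M^{*}$ and $w\in K(-1)^{*}$, one has $H_{v}\cap\DM=H^{M}_{\mu}$ for $\mu\neq0$, and summing the multiplicity $c_{\bar v}(v^{2}/2)$ of $H_{v}$ in $\operatorname{div}\Psi_{L}(f)$ over all $v$ with fixed $\mu$, using $v^{2}/2=\mu^{2}/2-\lambda^{2}/2$ for the corresponding $\lambda\in K^{*}$, reproduces exactly the coefficient displayed above. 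Hence the two divisors coincide. I expect the delicate points to be the bookkeeping of discriminant forms under the finite-index inclusion $L'\subset L$, and the verification that dividing out exactly the components $H_{v}\supset\DM$ corresponds on the Fourier side to retaining the terms with $\mu\neq0$, the terms with $\mu=0$ feeding only into the constant term treated next.

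Finally I would match weights and conclude. The weight of $\Psi_{M}(g)$ is $\tfrac12$ times the constant $\mathbf{e}_{0}$-coefficient of $g$, namely
\begin{equation*}
\frac{1}{2}\sum_{\lambda\in K^{*}}\big(f{\pull}\big)_{0,\bar\lambda}\!\left(-\frac{\lambda^{2}}{2}\right)=\frac{1}{2}c_{0}(0)+\frac{1}{2}\sum_{0\neq\lambda\in K^{*}}\big(f{\pull}\big)_{0,\bar\lambda}\!\left(-\frac{\lambda^{2}}{2}\right),
\end{equation*}
whose first summand is the weight of $\Psi_{L}(f)$ and whose second summand is precisely the total order of vanishing of $\Psi_{L}(f)$ along the divisors $H_{v}\supset\DM$ that were divided out. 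This is exactly the weight shift produced by quasi-pullback, so the two weights agree. With divisor and weight matched, $\Psi_{M}(g)/Q(\Psi_{L}(f))$ is a weight-$0$ form on $\DM$ with empty divisor, hence constant by the Koecher principle, which proves Theorem~\ref{main}.
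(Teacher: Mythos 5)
Your proposal is correct and takes essentially the same approach as the paper: the paper factors the argument into Lemma \ref{lem:pull on Borcherds product} (replacing $\Psi_{L}(f)$ by $\Psi_{L'}(f{\pull})$ on $\mathcal{D}_{L}=\mathcal{D}_{L'}$ via the decomposition of Heegner divisors under the finite-index inclusion $L'\subset L$) followed by the split-case comparison of Proposition \ref{prop:formula split case}, whereas you merge the two into a single Fourier-coefficient computation, but the substance---matching weight and divisor through the identity $c_{\mu}^{M}(l)=\sum_{v\in K(-1)^{\vee}}c^{L}_{(\mu,v)+L}(l+q(v))$ and then invoking the Koecher principle under the Witt index hypothesis---is identical. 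The one point you gloss over is that $g$ must be a legitimate Borcherds input, i.e.\ have integral principal part and even constant term $c_{0}(0)$ (the paper's Lemma \ref{lem:quasipullback WD}, where evenness needs the $\pm\lambda$ symmetry of the coefficients of $f$ and of $\theta_{K+\lambda}$); this is a routine verification that your framework accommodates, not a flaw in the argument.
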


Since $M$ has signature $(2, \ast)$, 
the Witt index condition is always satisfied when ${\rm rk}(M)\geq 5$. 
We need this condition only for the Koecher principle to hold.

In the case where $L$ is unimodular, so that $f$ is scalar-valued, 
the $\rho_M$-valued form $g$ becomes the product 
\begin{equation}\label{eqn:unimo formula}
g = f\cdot\Theta_{K}
\end{equation} 
under the isomorphism $\rho_M\simeq \rho_K$ 
(Example \ref{ex3}). 
Theorem \ref{main} in this version, 
especially for $(L, f)=(II_{2,26}, 1/\Delta)$, 
has been known to the experts, as can be found in the literature: 
\begin{itemize}
\item The first example is due to Borcherds (\S 16 of \cite{Bo95}) 
where $(L, f)=(II_{2,26}, 1/\Delta)$ and $M=II_{2,10}, II_{2,18}$.  
\item A sign can also be found in Theorem 13.1 of \cite{Bo95}  
where $(L, f)$ is general and $M=U\oplus\langle 2d \rangle$. 
Here $U$ is the even unimodular lattice of signature $(1, 1)$ and 
$\langle 2d \rangle$ is the rank $1$ lattice whose generators have norm $2d$.   
\item Another example appears in Theorem 8.5 of \cite{Yo98} 
where $(L, f)=(II_{2,26}, 1/\Delta)$ and $K$ is the Barnes-Wall lattice. 
\item A similar description can also be found in Remark 1 in \S 6 of \cite{G-H-S07}. 
\end{itemize}
We show that a similar formula holds more generally, 
with $f\cdot\Theta_{K}$ replaced by the tensor product $f\otimes\Theta_{K}$ (Example \ref{ex2}).  

In some applications, the $\rho_L$-valued form $f$ is constructed from 
a scalar-valued modular form $\varphi$ by means of ``induction'' 
(see, e.g., \cite{Bo00}, \cite{Scheit06}, \cite{Scheit09}, \cite{Scheit15}, \cite{Yo09}, \cite{Yo13}). 
In that case, the $\rho_M$-valued form $g$ can be expressed more explicitly 
in terms of $\varphi$ (\S \ref{ssec:theta contraction induction}). 
In a typical case, $g$ equals the induction from the scalar-valued form $\varphi \cdot \theta_{K}$, 
where $\theta_{K}$ is the scalar-valued theta series of $K$ 
(Corollary \ref{cor:induction and theta contraction split case}).

When $L$ contains $U \oplus U$, Gritsenko described the Borcherds lift 
in terms of the weak Jacobi forms of weight $0$ 
corresponding to the $\rho_L$-valued forms (see \S 3 of \cite{Gr12}). 
He proves that in the Jacobi form setting, 
quasi-pullback is given by the ordinary restriction of the source Jacobi form 
(see pp.16, 21, 23 of \cite{Gr10} for some examples). 
Theorem \ref{main}, in the case where $M$ contains $U \oplus U$, should be equivalent to 
(and gives a $\rho_L$-version proof of) Gritsenko's quasi-pullback formula. 
In fact, Theorem \ref{main} could be viewed as a unified generalization of 
the unimodular formula (Equation \eqref{eqn:unimo formula}) and the Gritsenko formula.

Theorem \ref{main} is proved by comparing the weights and the divisors of the two modular forms on ${\DM}$. 
Since we rely on the Koecher principle 
(and the fact that the character has finite order), 
the argument does not extend to the remaining case in ${\rm rk}(M)=3, 4$. 
But it seems plausible that the same formula would also hold in that case. 
At least we know that the two modular forms have the same weight and divisor. 

Schofer \cite{Schofer} considered an operation similar to the $\Theta$-contraction, 
at the level of the Schwarz space of $L\otimes \mathbb{A}_{f}$ 
(where $\mathbb{A}_{f}$ denotes the finite adeles) 
and for ${\rm rk}(M)=2$, 
to study CM values of Borcherds products. 
It may be the case that our $\Theta$-contraction is a finite version of Schofer's operation 
with general ${\rm rk}(M)$. 

Quasi-pullback of general holomorphic modular forms to rational quadratic divisors (i.e., ${\rm rk}(K)=1$) 
is systematically studied in \S 8.4 of \cite{G-H-S13}. 
The classical case $b=3, 2$, namely quasi-pullback 
from Siegel modular $3$-folds to Hilbert modular surfaces and 
from Hilbert modular surfaces to modular curves, 
has been also considered in \cite{Ao06}, \S 9 of \cite{vdG}, and \cite{Ao12}.

We thank K.~Yoshikawa for valuable remarks and for referring us to the paper \cite{Schofer},  
and V.~Gritsenko for kindly teaching us his Jacobi quasi-pullback formula. 
We also thank the referees for  
many detailed comments which helped us to improve the presentation.


\section{Weil representations and Borcherds products}\label{sec:preliminaries}

In this section we recall basic facts concerning 
Weil representations and Borcherds products (\cite{Bo98}, \cite{Br}). 
Let $L$ be an even lattice. 
By this we mean a free ${\Z}$-module of finite rank equipped with 
a symmetric bilinear form $(\: , \: ): L\times L\to {\Z}$ 
such that $(l, l)\in2{\Z}$ for every $l\in L$. 
The \textit{dual lattice} $L^{\vee}$ of $L$ is defined as the subgroup of $L_{{\Q}}$ 
consisting of vectors $l$ such that $(l, m)\in {\Z}$ for all $m\in L$. 
We write $q(l)=(l, l)/2$ for $l\in L^{\vee}$. 
Since $(l, m)\in {\Z}$ for every $l, m\in L$, we have $L\subset L^{\vee}$. 
The quotient $A_L=L^{\vee}/L$ is called the \textit{discriminant group} of $L$. 
Its natural ${\QZ}$-valued quadratic form $q: A_L\to{\QZ}$ 
is called the \textit{discriminant form} of $L$. 
The associated bilinear form 
\begin{equation*}
A_L \times A_L \to {\QZ}, \quad (\lambda, \mu)=q(\lambda+\mu)-q(\lambda)-q(\mu), 
\end{equation*}
is the reduction of the bilinear form on $L^{\vee}$ modulo ${\Z}$. 

In general, a finite abelian group $A$ equipped with 
a nondegenerate quadratic form $q:A\to{\QZ}$ is called a \textit{finite quadratic module}. 
We often abbreviate $(A, q)$ as $A$.  
Every finite quadratic module arises as the discriminant form of an even lattice (\cite{Ni}). 
We set $\sigma(A)=[b_{+}-b_{-}]\in{\Z}/8{\Z}$ 
where $A=A_{L}$ for an even lattice $L$ of signature $(b_{+}, b_{-})$. 
This value $\sigma(A)$ in ${\Z}/8{\Z}$ does not depend on the choice of $L$ 
such that $A=A_{L}$. 
The \textit{level} of $A$ is the smallest natural number $d$ such that 
$dq(\lambda)=0\in{\QZ}$ for all $\lambda\in A$.

\subsection{The Weil representation}\label{ssec:Weil rep}

Let ${\Mp}$ be the metaplectic double cover of ${\SL}$. 
It consists of elements of the form $(M, \phi)$ where 
$M=\begin{pmatrix}a & b \\ c & d \end{pmatrix}$ is an element of  ${\SL}$ 
and $\phi$ is a holomorphic function on the upper half plane ${\HH}$ such that $\phi(\tau)^2=c\tau+d$. 
It is known that ${\Mp}$ is generated by the two elements
\begin{equation*}
T = \left( \begin{pmatrix}1&1\\ 0&1\end{pmatrix}, 1 \right), \quad  
S = \left( \begin{pmatrix}0&-1\\ 1&0\end{pmatrix}, \sqrt{\tau} \right). 
\end{equation*}

Let $(A, q)$ be a finite quadratic module and let ${\C}A$ be the group algebra over $A$. 
For $\lambda\in A$ the corresponding basis vector of ${\C}A$ is denoted by ${\elambda}$. 
The \textit{Weil representation} $\rho_A$ of ${\Mp}$ 
is the unitary representation on ${\C}A$ defined by 
\begin{eqnarray*}
\rho_A(T)({\elambda}) & = & e(q(\lambda)){\elambda}, \\ 
\rho_A(S)({\elambda}) & = & 
\frac{e(-\sigma(A)/8)}{\sqrt{|A|}} \sum_{\mu\in A}e(-(\lambda, \mu)){\emu}, 
\end{eqnarray*}
where $e(z)={\rm exp}(2\pi i z)$ for $z\in{\Q}/{\Z}$. 
We write $\rho_{A}=\rho_{L}$ when $A=A_{L}$ for an even lattice $L$. 

A ${\C}A$-valued holomorphic function $f$ on ${\HH}$ is called a 
\textit{weakly holomorphic modular form} 
of type $\rho_{A}$ and weight $k\in\frac{1}{2}{\Z}$ if 
\begin{equation*}
f(M\tau ) = \phi(\tau)^{2k} \rho_{A}(\gamma) f(\tau) 
\end{equation*}
for every $\gamma=(M, \phi)\in{\Mp}$ and $f$ is meromorphic at the cusp. 
We write 
\begin{equation*}
f(\tau) = 
\sum_{\lambda\in A} f_{\lambda}(\tau){\elambda} = 
\sum_{\lambda\in A} \sum_{n\in q(\lambda)+{\Z}} c_{\lambda}(n) q^n {\elambda} 
\end{equation*}
for its Fourier expansion,  
where 
$q^n={\rm exp}(2\pi i n\tau)$ for $n\in{\Q}$. 
The finite sum 
$\sum_{\lambda} \sum_{n\leq 0} c_{\lambda}(n) q^n{\elambda}$ 
is called the \textit{principal part} of $f$. 
We say that $f$ has \textit{integral principal part} 
if all the Fourier coefficients $c_{\lambda}(n)$ with $n\leq0$ are integers. 
We write $M_{k}^{!}(\rho_{A})$ for the space of 
weakly holomorphic modular forms of weight $k$ and type $\rho_{A}$.

Theta series provide basic examples of holomorphic modular forms for the Weil representations. 
Let $K$ be an even positive-definite lattice. 
For $\lambda\in A_K$ the theta series $\theta_{K+\lambda}(\tau)$ is defined by 
\begin{equation*}
\theta_{K+\lambda}(\tau) = 
\sum_{l\in K+\lambda}q^{(l, l)/2} = 
\sum_{n\in q(\lambda)+{\Z}} c_{\lambda}^{K}(n)q^n, 
\end{equation*}
where $c_{\lambda}^{K}(n)$ is the number of vectors in 
$K+\lambda \subset K^{\vee}$ of norm $2n\geq0$. 
Note that $c_{\lambda}^{K}(n)$ is finite because $K^{\vee}$ is positive-definite. 
Taking the formal sum over $\lambda \in A_K$, we obtain the ${\C}A_K$-valued function 
\begin{equation*}
\Theta_K(\tau) = 
\sum_{\lambda\in A_K} \theta_{K+\lambda}(\tau){\elambda}. 
\end{equation*}
By Theorem 4.1 of \cite{Bo98}, 
this is a holomorphic modular form of type $\rho_K$ and weight ${\rm rk}(K)/2$ for ${\Mp}$.

\subsection{Three operations}\label{ssec:operations}

Borcherds found some operations for  
constructing modular forms for the Weil representations, 
which were subsequently developed by Bruinier and Scheithauer, 
as we recall below:   
\begin{itemize} 
\item Pullback to a sublattice (\cite{Br}, \cite{B-Y}, \cite{Br14}, \cite{Scheit15}) 
\item Pushforward to an overlattice (\cite{Bo98}, \cite{Br}, \cite{B-Y}, \cite{Br14}) 
\item Induction from scalar-valued modular forms 
(\cite{Bo00}, \cite{Scheit06}, \cite{Scheit15}) 
\end{itemize}

Let $A'$ be a finite quadratic module and 
let $I$ be an isotropic subgroup of $A'$. 
Then $A=I^{\perp}/I$ inherits the structure of a finite quadratic module. 
We have $|A|=|A'|/|I|^{2}$ and $\sigma(A)=\sigma(A')$. 
For example, when $A'=A_{L'}$ for an even lattice $L'$ 
and $L$ is an even overlattice of $L'$, 
then $I=L/L'$ is an isotropic subgroup of $A_{L'}$, 
and we have $A \simeq A_{L}$ naturally. 
Every isotropic subgroup of $A_{L'}$ arises in this way. 

Let $p: I^{\perp} \to A$ be the natural projection. 
We define linear maps 
\begin{equation*}\label{eqn:pull and push*}
{\pullA}: {\C}A \to{\C}A', \qquad  
{\pushA}: {\C}A' \to {\C}A, 
\end{equation*} 
by 
\begin{equation*}
{\elambda}{\pullA}=\sum_{\mu\in p^{-1}(\lambda)}{\emu}, \qquad 
{\emu}{\pushA} = 
\begin{cases}
\mathbf{e}_{p(\mu)}, & \mu\in I^{\perp}, \\ 
0, & \mu\not\in I^{\perp},  
\end{cases}
\end{equation*}
for $\lambda\in A$ and $\mu\in A'$ respectively. 
We write 
\begin{equation}\label{eqn:pull and push}
{\pull}={\pullA}, \qquad {\push}={\pushA}, 
\end{equation} 
when $A'=A_{L'}$ and $I=L/L'$ as above. 

\begin{lemma}\label{lem:pull push Weil equiv} 
The linear maps ${\pullA}$ and ${\pushA}$ are equivariant 
with respect to the Weil representations $\rho_{A}$, $\rho_{A'}$. 
\end{lemma}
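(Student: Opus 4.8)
The plan is to verify equivariance on the generators $T$ and $S$ of $\Mp$, since a linear map intertwining a generating set intertwines the whole group. Because the two maps $\pullA$ and $\pushA$ are defined through the projection $p\colon I^\perp\to A$ with $A=I^\perp/I$, the essential point is that $q$ and the bilinear form on $A$ are \emph{induced} from those on $A'$ (this is exactly what makes $A$ a finite quadratic module, together with $\sigma(A)=\sigma(A')$ recorded in the excerpt). So first I would fix the formulas $\rho_A(T)(\elambda)=e(q(\lambda))\elambda$ and $\rho_A(S)(\elambda)=\frac{e(-\sigma(A)/8)}{\sqrt{|A|}}\sum_{\nu\in A}e(-(\lambda,\nu))\mathbf e_\nu$, and the analogous formulas for $A'$, and carry out the two checks $\rho_{A'}(\gamma)\circ\pullA=\pullA\circ\rho_A(\gamma)$ for $\gamma\in\{T,S\}$.

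For $T$, equivariance of $\pullA$ is immediate: if $\mu\in p^{-1}(\lambda)$ then $q(\mu)=q(\lambda)$ in $\QZ$ because $q$ on $A$ is the reduction of $q$ on $I^\perp\subset A'$, so the eigenvalue $e(q(\mu))=e(q(\lambda))$ is constant over the fiber, and $T$ therefore commutes with the fiber-sum that defines $\emph{\elambda}\!\pullA$. The $S$-check is the computational heart. Applying $\rho_{A'}(S)$ to $\elambda\!\pullA=\sum_{\mu\in p^{-1}(\lambda)}\emu$ produces, up to the constant $e(-\sigma(A')/8)/\sqrt{|A'|}$, the vector $\sum_{\nu'\in A'}\bigl(\sum_{\mu\in p^{-1}(\lambda)}e(-(\mu,\nu'))\bigr)\mathbf e_{\nu'}$. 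The inner sum over the fiber $p^{-1}(\lambda)=\mu_0+I$ (for any fixed lift $\mu_0$) factors as $e(-(\mu_0,\nu'))\sum_{\iota\in I}e(-(\iota,\nu'))$, and the character sum over $I$ vanishes unless $\nu'\in I^\perp$, in which case it equals $|I|$. This collapses the outer sum onto $I^\perp$, and pushing through $p$ together with $|A'|=|A|\,|I|^2$ and $\sigma(A')=\sigma(A)$ matches $\pullA$ applied to $\rho_A(S)(\elambda)$ term by term.

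For $\pushA$ the two checks are dual and slightly easier: the $T$-computation again uses that $q$ is constant on fibers, and the $S$-computation uses the same orthogonality of characters of $I$, now arising from the condition $\mu\in I^\perp$ built into the definition of $\pushA$. Rather than redo the character sum, I would prefer to deduce the equivariance of $\pushA$ from that of $\pullA$ by an adjointness argument: with respect to the standard Hermitian inner products on $\C A$ and $\C A'$ (in which the $\elambda$ form orthonormal bases), one checks directly that $\pushA=|I|\cdot(\pullA)^\ast$, i.e. they are adjoint up to the scalar $|I|$. Since $\rho_A$ and $\rho_{A'}$ are \emph{unitary}, the adjoint of a $\rho_A$-to-$\rho_{A'}$ intertwiner is a $\rho_{A'}$-to-$\rho_A$ intertwiner, and the equivariance of $\pushA$ follows formally from that of $\pullA$.

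\medskip

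\textbf{The main obstacle} I anticipate is purely bookkeeping in the $S$-computation for $\pullA$: keeping the normalizing constants, the factor $|I|$ from the character sum, and the identities $|A'|=|A|\,|I|^2$ and $\sigma(A')=\sigma(A)$ aligned so that both sides agree \emph{exactly}, not merely up to an unexamined scalar. The conceptual input—orthogonality of characters of $I$ forcing the support onto $I^\perp$—is clean; the risk is an arithmetic slip in the constants, which the adjointness remark for $\pushA$ helps to cross-check.
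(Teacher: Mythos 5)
Your proposal is correct, and for $\pullA$ it is essentially the paper's own argument: reduce to the generators $T$ and $S$; the $T$-case is immediate because $q$ is constant on the fibers of $p$; the $S$-case is the character sum over the coset $p^{-1}(\lambda)=\mu_0+I$, which vanishes unless the target index lies in $I^{\perp}$ and otherwise contributes $|I|$, after which $|A'|=|A|\,|I|^{2}$ and $\sigma(A)=\sigma(A')$ make the constants match exactly. Where you genuinely depart from the paper is $\pushA$: the paper runs a second, independent character-sum computation (splitting into the cases $\mu\in I^{\perp}$, where $(\mu,\mu')$ descends through $p$, and $\mu\notin I^{\perp}$, where the sum over each $I$-orbit vanishes), whereas you deduce the equivariance of $\pushA$ formally from that of $\pullA$ by adjointness, using that $\rho_{A}$ and $\rho_{A'}$ are unitary (as the paper's definition states): taking adjoints of $\rho_{A'}(\gamma)\circ\pullA=\pullA\circ\rho_{A}(\gamma)$ and replacing $\gamma$ by $\gamma^{-1}$ shows that $(\pullA)^{\ast}$ intertwines in the opposite direction. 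This is a legitimate and slightly slicker route: it trades the paper's second explicit computation for a formal duality argument, at the cost of invoking unitarity rather than keeping everything self-contained. One small correction: with respect to the standard Hermitian inner products in which the $\elambda$ and the $\emu$ are orthonormal, the adjoint identity is exactly $\pushA=(\pullA)^{\ast}$, with no factor $|I|$; indeed, in these bases both maps have $0$--$1$ matrices that are transposes of each other, the $(\mu,\lambda)$ entry being $1$ precisely when $\mu\in p^{-1}(\lambda)$. (A factor $|I|$ does appear in the composite $\elambda\pullA\pushA=|I|\,\elambda$, which may be the source of the slip.) The error is harmless for your argument, since any nonzero scalar multiple of an intertwiner is an intertwiner, but the constant should be stated correctly.
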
 

This is well-known on the level of modular forms (see Corollary \ref{cor:pull push modular} below). 
Here we work at the level of representations, for which the proof is similar. 
We give the proof for the sake of completeness. 

\begin{proof}
It suffices to verify that 
$\rho_{A'}(\gamma)\circ {\pullA} = {\pullA} \circ \rho_{A}(\gamma)$ 
and 
$\rho_{A}(\gamma)\circ {\pushA} = {\pushA} \circ \rho_{A'}(\gamma)$ 
for $\gamma=T$ and $S$. 
The case $\gamma=T$ is evident. 
We check the case $\gamma=S$. 
Write 
$\zeta=e(-\sigma(A)/8)=e(-\sigma(A')/8)$. 
First, we consider ${\pushA}$. 
For $\mu\in A'$ we have 
\begin{equation*}
(\rho_{A'}(S)({\emu})){\pushA} 
= \frac{\zeta}{\sqrt{|A'|}} \sum_{\mu'\in I^{\perp}} 
e(-(\mu, \mu'))\mathbf{e}_{p(\mu')}. 
\end{equation*}
When $\mu\in I^{\perp}$, 
$(\mu, \mu')=(p(\mu), p(\mu'))$ depends only on $p(\mu')\in I^{\perp}/I$, 
so this is equal to 
\begin{equation*}
\frac{\zeta}{\sqrt{|A'|}} \cdot |I| \cdot 
\sum_{\lambda \in I^{\perp}/I} e(-(p(\mu), \lambda)){\elambda} 
= \rho_{A}(S)(\mathbf{e}_{p(\mu)}) 
= \rho_{A}(S)({\emu}{\pushA}). 
\end{equation*}
When $\mu\not\in I^{\perp}$, we have 
$\sum_{\mu'\in \mu_{0}+I} e(-(\mu, \mu'))=0$ for every $\mu_{0}\in A'$. 
Considering the division of $I^{\perp}$ into $I$-orbits, we obtain  
$(\rho_{A'}(S)({\emu})){\pushA}=0$. 
Hence   
$\rho_{A}(S)\circ {\pushA} = {\pushA} \circ \rho_{A'}(S)$. 

Next, we consider ${\pullA}$. 
For $\lambda \in A$ we have 
\begin{equation*}
\rho_{A'}(S)({\elambda}{\pullA}) 
 =  
\rho_{A'}(S)\left( \sum_{\mu\in p^{-1}(\lambda)}{\emu} \right)  
 =  
\frac{\zeta}{\sqrt{|A'|}} \sum_{\mu'\in A'}
\left( \sum_{\mu\in p^{-1}(\lambda)} e(-(\mu, \mu')) \right) \mathbf{e}_{\mu'}.  
\end{equation*}
Since $p^{-1}(\lambda)$ is an $I$-orbit, we have as above  
\begin{equation*}
\sum_{\mu\in p^{-1}(\lambda)}e(-(\mu, \mu')) = 
\begin{cases}
|I|\cdot e(-(\lambda, p(\mu'))), \quad \mu'\in I^{\perp}, \\ 
\quad 0, \qquad  \qquad  \qquad \;   \mu'\not\in I^{\perp}. 
\end{cases}
\end{equation*}
It follows that 
\begin{equation*}
\rho_{A'}(S)({\elambda}{\pullA}) = 
\frac{\zeta}{\sqrt{|A|}} \sum_{\mu' \in I^{\perp}} 
e(-(\lambda, p(\mu'))) \mathbf{e}_{\mu'} = 
(\rho_{A}(S)({\elambda})){\pullA}. 
\end{equation*}
\end{proof}

The map ${\pullA}$ transforms ${\C}A$-valued functions to ${\C}A'$-valued functions, 
and ${\pushA}$ transforms ${\C}A'$-valued functions to ${\C}A$-valued functions. 
We denote these operators also by ${\pullA}$, ${\pushA}$. 
Lemma \ref{lem:pull push Weil equiv} implies the following. 

\begin{corollary}[\cite{Bo98}, \cite{Br}, \cite{B-Y}, \cite{Br14}, \cite{Scheit15}]\label{cor:pull push modular}
The operators ${\pullA}$, ${\pushA}$ define linear maps 
${\pullA}: M_{k}^{!}(\rho_A)\to M_{k}^{!}(\rho_{A'})$ and 
${\pushA}: M_{k}^{!}(\rho_{A'})\to M_{k}^{!}(\rho_A)$. 
\end{corollary}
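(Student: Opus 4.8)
The plan is to reduce the corollary to the equivariance established in Lemma \ref{lem:pull push Weil equiv}, exploiting that both $\pullA$ and $\pushA$ are $\mathbb{C}$-linear maps that do not depend on $\tau$. Concretely, given $f \in M_k^!(\rho_A)$, I would apply $\pullA$ to the values of $f$ and verify directly that $f\pullA$ obeys the transformation law of type $\rho_{A'}$ and weight $k$. For any $\gamma = (M, \phi) \in \Mp$ one computes $(f\pullA)(M\tau) = (f(M\tau))\pullA = \phi(\tau)^{2k}(\rho_A(\gamma)f(\tau))\pullA$, and then Lemma \ref{lem:pull push Weil equiv} rewrites $(\rho_A(\gamma)f(\tau))\pullA = \rho_{A'}(\gamma)(f(\tau)\pullA)$, yielding the required identity $(f\pullA)(M\tau) = \phi(\tau)^{2k}\rho_{A'}(\gamma)(f\pullA)(\tau)$. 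The weight is unchanged because the automorphy factor $\phi(\tau)^{2k}$ is a scalar that passes through the linear operator untouched. The same computation, using the push part of the lemma, handles $f\pushA$.

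It remains to check the two analytic conditions: holomorphy on $\HH$ and meromorphy at the cusp. The first is immediate, since $f\pullA$ and $f\pushA$ are finite linear combinations of the holomorphic component functions of $f$ with constant coefficients. For the cusp condition I would pass to the Fourier expansion. Writing $f = \sum_\lambda \sum_n c_\lambda(n) q^n \elambda$, one finds $f\pullA = \sum_{\mu \in I^\perp}\sum_n c_{p(\mu)}(n) q^n \emu$ and $f\pushA = \sum_{\lambda}\sum_n \bigl(\sum_{\mu \in p^{-1}(\lambda)} c_\mu(n)\bigr) q^n \elambda$. In both cases each component is obtained from the components of $f$ by a finite relabelling or summation over the fibers of $p \colon I^\perp \to A$, so the principal parts remain finite and $f\pullA$, $f\pushA$ are meromorphic at the cusp. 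One should also record that the exponents occurring in the $\emu$-component of $f\pullA$ lie in $q(\mu)+\mathbb{Z}$, consistent with the type $\rho_{A'}$, because $p$ preserves the quadratic form modulo $\mathbb{Z}$; the analogous compatibility holds for $\pushA$.

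There is no serious obstacle: the representation-theoretic core is already contained in Lemma \ref{lem:pull push Weil equiv}, and the only point requiring a moment's care is confirming that the two operators preserve the ``meromorphic at the cusp'' condition. Since they act on Fourier expansions by a finite redistribution of the coefficients over the finite group $A'$, this preservation is essentially automatic.
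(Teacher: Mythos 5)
Your proposal is correct and takes essentially the same route as the paper, which states the corollary as a direct consequence of Lemma \ref{lem:pull push Weil equiv}: pointwise application of the equivariant operators converts the $\rho_A$-transformation law into the $\rho_{A'}$-law (and conversely), while holomorphy and meromorphy at the cusp are preserved since the operators act on Fourier expansions by finite relabelling and summation over fibers of $p$. Your explicit Fourier expansions of $f{\pullA}$ and $f{\pushA}$ match those the paper uses later (e.g.\ in Lemma \ref{lem:pull on Borcherds product}), so nothing is missing.
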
 


We now turn to describing induction from scalar-valued modular forms (\cite{Bo00}, \cite{Scheit06}). 
Let $A$ be a finite quadratic module. 
Let $d$ be a natural number divisible by the level of $A$. 
We write ${\MGd}$ for the inverse image of $\Gamma_0(d)$ in ${\Mp}$. 
By \cite{Scheit09}, \cite{Stro}, \cite{Ze}, there is a character $\chi_{A}$ of ${\MGd}$ such that 
$\rho_A(\gamma){\e}_0 = \chi_A(\gamma){\e}_0$ for every $\gamma\in{\MGd}$. 
More generally, if $I\subset A$ is an isotropic subgroup, we have  
\begin{equation*}
\rho_A(\gamma)\left(  \sum_{\lambda\in I} {\elambda} \right) = 
\chi_A(\gamma) \left(  \sum_{\lambda\in I} {\elambda} \right) 
\end{equation*}
for every $\gamma\in{\MGd}$ 
by Proposition 4.5 of \cite{Scheit09} and Lemma 5.6 of \cite{Stro}. 
See \S 4 of \cite{Scheit09}, \S 5 of \cite{Stro}, and \cite{Ze} for the explicit form of $\chi_A$. 
Now, if $\varphi$ is a scalar-valued weakly holomorphic modular form 
of weight $k$ and character $\chi_A$ for ${\MGd}$, 
we define  
\begin{equation}\label{eqn:def induction}
{\rm ind}_{A}^{I}(\varphi) = \sum_{\gamma\in{\MGd}\backslash{\Mp}} 
(\varphi|_{k}\gamma) \cdot \rho_A(\gamma)^{-1} \left( \sum_{\lambda\in I}{\elambda} \right), 
\end{equation}
where 
$(\varphi|_{k}\gamma)(\tau) = \phi(\tau)^{-2k}\varphi(M\tau)$ 
is the Petersson slash operator of weight $k$ by $\gamma=(M, \phi)$.  
This is a weakly holomorphic modular form of weight $k$ and type $\rho_A$ for ${\Mp}$. 
This construction is due to Borcherds (p.342 of \cite{Bo00}) for $I=\{ 0 \}$, 
and Scheithauer (Theorem 6.2 of \cite{Scheit06}) for general $I$. 
We especially denote ${\rm ind}_{A}^{\{ 0 \} }={\rm ind}_{A}$. 
When $A=A_{L}$ for an even lattice $L$, 
we also write $\chi_{L}=\chi_{A}$ and ${\rm ind}_{L}={\rm ind}_{A}$. 
Note that if $d_{A}$ is the level of $A$, we have 
\begin{equation*}
{\rm ind}_{A}^{I}(\varphi) = \sum_{\gamma\in M\Gamma_{0}(d_{A})\backslash{\Mp}} 
(\psi|_{k}\gamma) \cdot \rho_A(\gamma)^{-1} \left( \sum_{\lambda\in I}{\elambda} \right), 
\end{equation*}
where 
$\psi=
\sum_{\gamma \in {\MGd}\backslash M\Gamma_{0}(d_{A})}
(\varphi|_{k}\gamma)\chi_{A}(\gamma)^{-1}$ 
is the average of $\varphi$ over ${\MGd}\backslash M\Gamma_{0}(d_{A})$. 
In this sense, the induction is done essentially at level $d_{A}$. 

The relationship between ${\rm ind}_{A}$ and ${\pullA}$, ${\pushA}$ is as follows. 

\begin{lemma}\label{lem:induction and pull push}
Let $A'$ be a finite quadratic module and 
set $A=I^{\perp}/I$ for an isotropic subgroup $I$ of $A'$. 
Then for every natural number $d$ divisible by the level of $A'$, 
we have the equalities   
${\pullA}\circ {\rm ind}_{A}={\rm ind}_{A'}^{I}$ and  
${\pushA}\circ {\rm ind}_{A'}={\rm ind}_{A}$ 
on modular forms for $M\Gamma_{0}(d)$ with character $\chi_{A'}$. 
\end{lemma}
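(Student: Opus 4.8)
The plan is to reduce both identities to the equivariance of ${\pullA}$ and ${\pushA}$ established in Lemma~\ref{lem:pull push Weil equiv}, together with the elementary computation of how the two operators act on the vector ${\e}_{0}$. The key observations are that $p^{-1}(0)=I$, so that
\begin{equation*}
{\e}_{0}{\pullA} = \sum_{\mu\in p^{-1}(0)}{\e}_{\mu} = \sum_{\lambda\in I}{\e}_{\lambda},
\end{equation*}
and that $0\in I^{\perp}$ with $p(0)=0$, so that ${\e}_{0}{\pushA}={\e}_{0}$.

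First I would address a compatibility needed merely for the statement to be well-posed: since ${\rm ind}_{A}$ requires a form of character $\chi_{A}$ while the lemma is phrased for forms of character $\chi_{A'}$, I must check that $\chi_{A}=\chi_{A'}$ as characters of $M\Gamma_{0}(d)$. One also notes that the level of $A=I^{\perp}/I$ divides the level of $A'$, since the form on $A$ is the restriction of that on $A'$, so a single $d$ divisible by the level of $A'$ serves for both inductions. The character identity follows by applying ${\pullA}$ to the defining relation $\rho_{A}(\gamma){\e}_{0}=\chi_{A}(\gamma){\e}_{0}$ for $\gamma\in M\Gamma_{0}(d)$: by equivariance and the computation of ${\e}_{0}{\pullA}$,
\begin{equation*}
\rho_{A'}(\gamma)\Bigl(\sum_{\lambda\in I}{\e}_{\lambda}\Bigr) = \bigl(\rho_{A}(\gamma){\e}_{0}\bigr){\pullA} = \chi_{A}(\gamma)\sum_{\lambda\in I}{\e}_{\lambda},
\end{equation*}
and comparing with the defining relation of $\chi_{A'}$ for the isotropic subgroup $I$ yields $\chi_{A}=\chi_{A'}$ on $M\Gamma_{0}(d)$, the vector $\sum_{\lambda\in I}{\e}_{\lambda}$ being nonzero.

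With this in place the two identities are immediate. For the first, I would apply ${\pullA}$ term by term to the finite sum defining ${\rm ind}_{A}(\varphi)$; linearity lets ${\pullA}$ pass inside, and equivariance moves it through $\rho_{A}(\gamma)^{-1}$, giving
\begin{equation*}
\bigl(\rho_{A}(\gamma)^{-1}{\e}_{0}\bigr){\pullA} = \rho_{A'}(\gamma)^{-1}\bigl({\e}_{0}{\pullA}\bigr) = \rho_{A'}(\gamma)^{-1}\Bigl(\sum_{\lambda\in I}{\e}_{\lambda}\Bigr),
\end{equation*}
so the transformed sum is exactly ${\rm ind}_{A'}^{I}(\varphi)$. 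The second identity is identical in structure, using instead ${\e}_{0}{\pushA}={\e}_{0}$ to recover ${\rm ind}_{A}(\varphi)$ from ${\pushA}\circ{\rm ind}_{A'}(\varphi)$.

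I expect no genuine obstacle here: the substantive content is entirely contained in Lemma~\ref{lem:pull push Weil equiv}. The only points demanding care are the bookkeeping ones — verifying the character compatibility $\chi_{A}=\chi_{A'}$ so that both sides refer to the same $\varphi$, and confirming that one $d$ divisible by the level of $A'$ provides a common index set $M\Gamma_{0}(d)\backslash\Mp$ for the two inductions — rather than any nontrivial calculation.
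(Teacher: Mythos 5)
Your proposal is correct and follows essentially the same route as the paper: both reduce the identities to the equivariance of ${\pullA}$ and ${\pushA}$ from Lemma~\ref{lem:pull push Weil equiv}, combined with the computations ${\e}_{0}{\pullA}=\sum_{\lambda\in I}{\elambda}$ and ${\e}_{0}{\pushA}={\e}_{0}$. The character compatibility $\chi_{A}=\chi_{A'}$ on $M\Gamma_{0}(d)$ that you verify is also exactly how the paper handles this point (it records it, via the same equivariance lemma, in the remark immediately following its proof), so your extra bookkeeping matches the paper's rather than diverging from it.
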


\begin{proof}
By Lemma \ref{lem:pull push Weil equiv} we have 
\begin{eqnarray*}
{\rm ind}_{A}(\varphi){\pullA} 
& = & 
\sum_{\gamma \in {\MGd}\backslash{\Mp}} 
(\varphi|_{k}\gamma)(\rho_A(\gamma)^{-1}\mathbf{e}_0){\pullA} \\ 
& = & 
\sum_{\gamma \in {\MGd}\backslash{\Mp}} 
(\varphi|_{k}\gamma) \rho_{A'}(\gamma)^{-1}(\mathbf{e}_0{\pullA})   
 =  
{\rm ind}_{A'}^{I}(\varphi). 
\end{eqnarray*}
We can verify the equality 
${\pushA}\circ {\rm ind}_{A'}={\rm ind}_{A}$ 
similarly, since ${\pushA}$ sends $\mathbf{e}_{0}\in{\C}A'$ to $\mathbf{e}_{0}\in{\C}A$. 
\end{proof}

Note that $\chi_{A}=\chi_{A'}$ over $M\Gamma_{0}(d_{A'})$ by Lemma \ref{lem:pull push Weil equiv}, 
so in particular $\chi_{A'}$ can be extended from $M\Gamma_{0}(d_{A'})$ to $M\Gamma_{0}(d_{A})$. 
Then 
$\rho_{A'}(\gamma)(\sum_{I}{\elambda})=\chi_{A}(\gamma)\sum_{I}{\elambda}$ 
for $\gamma\in M\Gamma_{0}(d_{A})$ 
by Lemma \ref{lem:pull push Weil equiv}. 
Hence the induction ${\rm ind}_{A'}^{I}$ can also be defined on 
$(M\Gamma_{0}(d_{A}), \chi_{A})$, 
not just on $(M\Gamma_{0}(d_{A'}), \chi_{A'})$, 
and Lemma \ref{lem:induction and pull push} holds also at level $d_{A}$.  
 

\begin{remark}[Jacobi form interpretation]\label{remark:pullback and Jacobi form}
Assume that $A=A_{N(-1)}$ for an even positive-definite lattice $N$. 
We identify $A=A_{N}$ as abelian groups naturally. 
Then $\rho_A$-valued modular forms 
$f(\tau)=\sum_{\lambda}f_{\lambda}(\tau){\elambda}$ 
correspond to Jacobi forms 
$\varphi(\tau, z)=\sum_{\lambda}f_{\lambda}(\tau)\theta_{N+\lambda}(\tau, z)$ 
of index $N$, 
where $\theta_{N+\lambda}(\tau, z)$ is the Jacobi theta series of $N+\lambda$ 
defined on $\mathbb{H}\times N_{{\C}}$ 
(see Lemma 2.3 of \cite{Gr95}).   

If $A'=A_{N'(-1)}$ for a finite-index sublattice $N'$ of $N$ and $I=N(-1)/N'(-1)$,  
the Jacobi form of index $N'$ corresponding to $f{\pullA}$ is 
just the same function $\varphi(\tau, z)$, 
considered on $\mathbb{H}\times N'_{{\C}}$ via the identification $N_{{\C}}=N'_{{\C}}$. 
This follows from the decomposition  
$$
\theta_{N+\lambda}(\tau, z) = \sum_{\mu\in p^{-1}(\lambda)}\theta_{N'+\mu}(\tau, z) 
$$
of the Jacobi theta series. 
Thus the interpretation of the operation ${\pullA}$ in terms of Jacobi forms is 
"changing the reference lattice", without changing the Jacobi form itself. 

If we restrict the Jacobi form $\varphi(\tau, z)$ to $\mathbb{H}\times \{ 0 \}$, 
we obtain the scalar-valued modular form 
$\varphi(\tau, 0) = \sum_{\lambda}f_{\lambda}(\tau)\theta_{N+\lambda}(\tau)$ 
because $\theta_{N+\lambda}(\tau, 0)=\theta_{N+\lambda}(\tau)$. 
This operation, replacing ${\elambda}$ by $\theta_{N+\lambda}(\tau)$ in  
$f=\sum_{\lambda}f_{\lambda}{\elambda}$ 
after $A_{N(-1)}\simeq A_{N}$, 
is the simplest example of the $\Theta$-contraction defined in \S \ref{ssec:split case}. 
More generally, 
when $N$ splits as $N_{1}\oplus N_{2}$, 
the Jacobi theta series decomposes as 
\begin{equation*}
\theta_{N+\lambda}(\tau, z) = 
\theta_{N_{1}+\lambda_{1}}(\tau, z_{1}) \cdot \theta_{N_{2}+\lambda_{2}}(\tau, z_{2}), 
\end{equation*}
where $z=(z_{1}, z_{2})$ with $z_{i}\in (N_{i})_{{\C}}$ and 
$\lambda=(\lambda_{1}, \lambda_{2})$ with $\lambda_{i}\in A_{i}:=A_{N_{i}(-1)}$. 
Therefore the restriction of $\varphi(\tau, z)$ 
to $\mathbb{H}\times (N_{1})_{{\C}}$ is given by   
\begin{equation*}
\varphi(\tau, z_{1}, 0) = 
\sum_{\lambda_{1}\in A_{1}} \sum_{\lambda_{2}\in A_{2}} 
f_{\lambda_{1}, \lambda_{2}}(\tau) \theta_{N_{2}+\lambda_{2}}(\tau) 
\theta_{N_{1}+\lambda_{1}}(\tau, z_{1}). 
\end{equation*}
This is the Jacobi form of index $N_{1}$ corresponding to 
the $\rho_{A_{1}}$-valued form  
\begin{equation}\label{eqn: Theta-contraction = restriction}
\sum_{\lambda_{1}\in A_{1}} \left( \sum_{\lambda_{2}\in A_{2}} 
f_{\lambda_{1}, \lambda_{2}}(\tau) \theta_{N_{2}+\lambda_{2}}(\tau) \right) 
\mathbf{e}_{\lambda_{1}}. 
\end{equation}
This operation on $f$ is a typical example of a $\Theta$-contraction. 
\end{remark}

\subsection{Borcherds products}\label{ssec:Borcherds prod}

Let $L$ be an even lattice of signature $(2, b)$. 
We recall the basic theory of Borcherds products for $L$ 
(see \S 13 of \cite{Bo98} and \S 3.3, \S 3.4 of \cite{Br} for more details). 
Let ${\DL}$ be the Hermitian symmetric domain attached to $L$, 
which is defined as 
one of the two connected components of the following open set of the isotropic quadric:  
\begin{equation*}\label{eqn:def type IV domain}
\{ [\omega] \in {\proj}(L_{{\C}}) \; | \; (\omega, \omega)=0, (\omega, \bar{\omega})>0  \} . 
\end{equation*}
We write ${\Or}(L)$ for the subgroup of ${\rm O}(L)$ preserving ${\DL}$, 
and ${\GL}$ for the kernel of the natural map ${\Or}(L)\to{\OAL}$. 

Let $\mathcal{O}(-1)$ be the tautological line bundle over ${\DL}$. 
Let $\Gamma$ be a finite-index subgroup of ${\Or}(L)$ and 
let $\chi: \Gamma \to {\C}^{\times}$ be a character. 
The group $\Gamma$ acts on $\mathcal{O}(-1)$ equivariantly. 
A meromorphic section $\Psi$ of $\mathcal{O}(-k)$ over ${\DL}$ satisfying 
$\gamma^{\ast}\Psi=\chi(\gamma)\Psi$ for every $\gamma\in \Gamma$ 
is called a \textit{meromorphic modular form} 
of weight $k$ and character $\chi$ with respect to $\Gamma$. 
If 
\begin{equation*}
\mathcal{D}_{L}^{\bullet} = 
\{  \omega \in L_{{\C}} \: | \: \omega \ne 0, \: [\omega]\in {\DL}  \} 
\end{equation*} 
is the affine cone over ${\DL}$ minus the vertex 
(which is the total space of $\mathcal{O}(-1)$ minus the zero section), 
a section of $\mathcal{O}(-k)$ over ${\DL}$ corresponds canonically to 
a function on $\mathcal{D}_{L}^{\bullet}$ that is homogeneous of degree $-k$ 
on every ${\C}^{\times}$-fiber of $\mathcal{D}_{L}^{\bullet}\to\mathcal{D}_{L}$. 
Thus a meromorphic modular form of weight $k$ and character $\chi$ 
is canonically identified with a meromorphic function $\Psi$ on $\mathcal{D}_{L}^{\bullet}$ 
which satisfies 
$\Psi(\gamma\omega)=\chi(\gamma)\Psi(\omega)$ for every $\gamma\in\Gamma$ and  
$\Psi(t\omega)=t^{-k}\Psi(\omega)$ for every $t \in {\C}^{\times}$ 
(cf.~\cite{Bo98}, \cite{Br}).

A vector $l\in L^{\vee}$ of negative norm defines 
the hyperplane section $l^{\perp}\cap{\DL}$ of ${\DL}$. 
This is called a \textit{rational quadratic divisor}  
and is naturally identified with $\mathcal{D}_{l^{\perp}\cap L}$. 
More generally, if $K(-1)$ is a negative-definite sublattice of $L$, 
the intersection $K(-1)^{\perp}\cap{\DL}$ is identified with ${\DM}$ where $M=K(-1)^{\perp}\cap L$. 
For $\lambda \in A_L$ and $n\in q(\lambda)+{\Z}$ with $n<0$, 
the locally finite divisor  
\begin{equation*}
Z(\lambda, n) = \sum_{\begin{subarray}{c} l\in L+\lambda \\ q(l)=n\end{subarray}} (l^{\perp}\cap{\DL}) 
\end{equation*} 
of ${\DL}$ is called the \textit{Heegner divisor} of discriminant $(\lambda, n)$. 
It descends to a finite divisor on ${\GL}\backslash {\DL}$. 
If $2\lambda\ne0$, every component of $Z(\lambda, n)$ has multiplicity $1$, 
while if $2\lambda=0$, the components have multiplicity $2$ 
because of the contribution from both $l$ and $-l$.  

\begin{theorem}[Borcherds \cite{Bo98}]
Let $f(\tau)=\sum_{\lambda}\sum_{n}c_{\lambda}(n)q^n{\elambda}$ 
be a weakly holomorphic modular form of type $\rho_L$ and weight $1-b/2$ 
with integral principal part and $c_0(0)\in2{\Z}$. 
Then there exists a meromorphic modular form $\Psi_L(f)$ on ${\DL}$ 
of weight $c_0(0)/2$ and some unitary character $\chi$ 
with respect to ${\GL}$ whose divisor is  
\begin{equation}\label{eqn:divisor Borcherds prod I}
{\divi}(\Psi_L(f)) = 
\frac{1}{2} \sum_{\lambda\in A_L} 
\sum_{\begin{subarray}{c} n<0 \\ n\in q(\lambda)+{\Z} \end{subarray}} 
c_{\lambda}(n)Z(\lambda, n). 
\end{equation}
\end{theorem}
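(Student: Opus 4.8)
The plan is to realize $\Psi_L(f)$ through Borcherds' singular theta correspondence and then extract its weight, character, and divisor from that analytic construction. First I would attach to $L$ its Siegel theta function $\Theta_L(\tau, v)$, depending on $\tau \in \HH$ and on a point $v \in \DL$ (a positive-definite oriented $2$-plane in $L_{\R}$); it is a non-holomorphic modular form for $\Mp$ of type $\rho_L$ and weight $(b^+/2, b^-/2) = (1, b/2)$, normalized by powers of $y = \mathrm{Im}\,\tau$ so that the pairing $\langle f(\tau), \overline{\Theta_L(\tau, v)} \rangle = \sum_{\lambda} f_{\lambda}(\tau)\overline{\Theta_{L,\lambda}(\tau, v)}$ is $\SL$-invariant. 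Because $f$ is only weakly holomorphic, it grows exponentially at the cusp and the naive integral diverges, so I would pass to the regularized theta integral
\begin{equation*}
\Phi_L(v, f) = \int_{\mathcal{F}}^{\mathrm{reg}} \langle f(\tau), \overline{\Theta_L(\tau, v)} \rangle \, \frac{dx\,dy}{y^2},
\end{equation*}
where $\mathcal{F}$ is the standard fundamental domain for $\SL$ on $\HH$ and the regularization truncates the cusp at height $T$, inserts $y^{-s}$, and takes the constant term of the analytic continuation as $T \to \infty$. The resulting $\Phi_L(\cdot, f)$ is real-analytic away from the rational quadratic divisors $l^{\perp}\cap\DL$.

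Second, I would compute the singularities of $\Phi_L$. Only the principal part $\sum_{\lambda}\sum_{n \le 0} c_\lambda(n) q^n \elambda$ of $f$ contributes divergence, and a local analysis near a point of $l^\perp \cap \DL$, for $l \in L^\vee$ of norm $2n < 0$, shows that $\Phi_L$ acquires a logarithmic singularity along each component of the Heegner divisor $Z(\lambda, n)$ with strength proportional to $c_\lambda(n)$. Summing over $l \in L + \lambda$ with $q(l) = n$ and accounting for the contribution of both $l$ and $-l$ reproduces exactly the right-hand side of \eqref{eqn:divisor Borcherds prod I}, including the factor $1/2$.

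Third, and this is the technical heart, I would unfold the integral at a $0$-dimensional cusp. Choosing a primitive isotropic $z \in L$, one passes to $K = (z^\perp \cap L)/\Z z$ of signature $(1, b-1)$, introduces tube-domain coordinates $Z$ on $\DL$, and applies Rankin--Selberg unfolding to rewrite $\Phi_L(v, f)$ in terms of the theta data of $K$. Separating the contributions, one finds $\Phi_L(v, f) = -2\log\|\Psi_L(f)(v)\|$ up to an additive constant, where $\|\cdot\|$ is the Petersson metric on $\mathcal{O}(-c_0(0)/2)$ and $\Psi_L(f)$ is, on a Weyl chamber $W$, given by the infinite product
\begin{equation*}
\Psi_L(f) = C \, e((\rho_W, Z)) \prod_{\substack{l \in K^\vee \\ (l, W) > 0}} \bigl( 1 - e((l, Z)) \bigr)^{c_l(q(l))},
\end{equation*}
with $\rho_W$ the Weyl vector and $C$ a constant. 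The coefficient $c_0(0)$ of $q^0 \e_0$ governs the automorphy factor, forcing the weight to be $c_0(0)/2$, while the discrepancy between the product expansions on adjacent Weyl chambers produces a unitary character $\chi$ of $\GL$ of finite order. Matching the logarithmic singularities from the second step against the zeros and poles visible in the product then confirms that $\divi(\Psi_L(f))$ equals \eqref{eqn:divisor Borcherds prod I}.

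The main obstacle is this unfolding step together with the attendant convergence and single-valuedness analysis: one must control the lattice sums over $K^\vee$, establish convergence of the infinite product in a neighborhood of the cusp, identify the Weyl-chamber decomposition and the Weyl vector $\rho_W$, and verify that the local product expansions patch together into a globally defined meromorphic modular form on all of $\DL$, invariant under $\GL$ up to the character $\chi$. Once this is in place, the weight $c_0(0)/2$, the divisor \eqref{eqn:divisor Borcherds prod I}, and the finite-order character $\chi$ all follow from the two computations above.
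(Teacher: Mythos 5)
This theorem is not proved in the paper at all: it is imported verbatim from Borcherds \cite{Bo98}, so there is no internal proof to compare against. Your outline is a faithful reconstruction of Borcherds' own argument from that source --- the regularized Siegel theta integral $\Phi_L(v,f)$, the computation of its logarithmic singularities along the Heegner divisors (with the factor $1/2$ coming from the $\pm l$ contributions), and the unfolding against a primitive isotropic vector to obtain the infinite product, the weight $c_0(0)/2$, and the unitary character --- so in that sense it is correct and takes the same route as the cited proof. One caveat worth naming: your third step requires a primitive isotropic $z\in L$, i.e.\ a $0$-dimensional cusp. By Meyer's theorem this exists automatically when $b\geq 3$, but for $b\leq 2$ the lattice $L$ may be anisotropic, and then the tube-domain coordinates, Weyl chambers, and product expansion are all unavailable; Borcherds completes that case by embedding $L$ into a larger isotropic lattice and renormalizing the restriction of the lift --- exactly the quasi-pullback operation that is the subject of this paper (see the introduction's list of applications and Example \ref{ex4}). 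So as written your sketch establishes the theorem only for isotropic $L$, and the remaining case needs the embedding trick rather than unfolding. Also, minor point: the finite order of $\chi$ that you assert is not part of this statement (which claims only unitarity) and is a separate, later result; the paper is careful to invoke it only where needed, via Lemma \ref{lem:Koecher}.
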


The modular form $\Psi_L(f)$ is called the \textit{Borcherds product} associated to $f$. 
Equation \eqref{eqn:divisor Borcherds prod I} can also be written in the form  
\begin{eqnarray}\label{eqn:divisor Borcherds prod II}
{\divi}(\Psi_L(f)) 
&=& \frac{1}{2} \sum_{\begin{subarray}{c} l\in L^{\vee} \\ q(l)<0 \end{subarray}} c_{l+L}(q(l)) \, (l^{\perp}\cap{\DL}) \nonumber \\ 
&=& \sum_{\begin{subarray}{c} l\in L^{\vee}/\pm1 \\ q(l)<0 \end{subarray}} c_{l+L}(q(l)) \, (l^{\perp}\cap{\DL}). 
\end{eqnarray}
Here 
we have $c_{\lambda}(n)=c_{-\lambda}(n)$ by the invariance of $f$ under $Z=S^2$. 
The factor $1/2$ in Equation \eqref{eqn:divisor Borcherds prod I} arises 
from the multiplicities of the Heegner divisors $Z(\lambda, n)$.


\section{Quasi-pullbacks}\label{sec:quasi-pullback}

Let $L$ be an even lattice of signature $(2, b)$. 
Let $K(-1)$ be a primitive negative-definite sublattice of $L$ 
where $K$ is positive-definite. 
We assume that 
the Witt index of the orthogonal complement $M=K(-1)^{\perp}\cap L$ is 
smaller than ${\rm rk}(M)-2$. 
Since $M$ has signature $(2, \ast)$, 
its Witt index cannot exceed $2$, 
so this condition is always satisfied when ${\rm rk}(M)\geq 5$; 
when ${\rm rk}(M)=4$, this is equivalent to the absence of isotropic sublattices of rank $2$ in $M$ 
(e.g., the case of Hilbert modular surfaces); 
when ${\rm rk}(M)=3$, $M$ is required to be anisotropic,  
which is equivalent to $\Gamma\backslash\mathcal{D}_{M}$ having no cusps 
and hence being compact. 
Under this condition, we can use the Koecher principle on $\mathcal{D}_{M}$ in the following form. 

\begin{lemma}\label{lem:Koecher}
Let $M$ be as above. 
Let $\Gamma$ be a finite-index subgroup of ${\Or}(M)$ and 
let $\chi$ be a unitary character of $\Gamma$. 
Then any nonzero modular form of weight $0$ and character $\chi$ for $\Gamma$ 
which has no pole on $\mathcal{D}_{M}$ is constant 
(and we must have $\chi=1$). 
\end{lemma}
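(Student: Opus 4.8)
The plan is to interpret the weight-zero hypothesis as saying that $\Psi$ is, up to the character, an honest holomorphic function on $\DM$, and then to force it to be constant by extending it over the Baily--Borel boundary and invoking compactness; the Witt index assumption enters exactly to guarantee that this boundary has codimension at least $2$.

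First I would unwind the definitions of \S\ref{ssec:Borcherds prod}. A modular form of weight $0$ is a degree-$0$ homogeneous function on $\DM^{\bullet}$, so it descends to a function $\Psi$ on $\DM$; having no pole, $\Psi$ is holomorphic on all of $\DM$ and satisfies $\Psi(\gamma z)=\chi(\gamma)\Psi(z)$ for $\gamma\in\Gamma$. Since $\chi$ has finite order $N$, the power $\Psi^{N}$ satisfies $\Psi^{N}\circ\gamma=\chi(\gamma)^{N}\Psi^{N}=\Psi^{N}$, so it is genuinely $\Gamma$-invariant and descends to a holomorphic function $F$ on the quotient $\Gamma\backslash\DM$.

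The heart of the argument is a codimension estimate on the Baily--Borel compactification $\overline{\Gamma\backslash\DM}$, which is a normal projective variety of dimension $\rk(M)-2$. Its boundary components are indexed by $\Gamma$-orbits of primitive isotropic sublattices of $M$, and a rank-$r$ isotropic sublattice (necessarily $r\le 2$, as $M$ has signature $(2,\ast)$) contributes a boundary component of dimension $r-1$: an isotropic line gives a point and an isotropic plane gives a modular curve (see \S 8 of \cite{G-H-S13}). Hence the top-dimensional boundary stratum has dimension $w-1$, where $w$ is the Witt index of $M$, so the boundary has codimension $(\rk(M)-2)-(w-1)=\rk(M)-1-w$ in $\overline{\Gamma\backslash\DM}$. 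The hypothesis $w<\rk(M)-2$ is precisely the statement that this codimension is at least $2$.

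With codimension at least $2$ established, the Koecher principle follows from the Riemann extension theorem on the normal variety $\overline{\Gamma\backslash\DM}$: the holomorphic function $F$ on $\Gamma\backslash\DM$ extends to a global holomorphic function on the compact connected variety $\overline{\Gamma\backslash\DM}$, and is therefore constant. Consequently $\Psi^{N}$ is constant, and since $\DM$ is connected $\Psi$ itself equals a nonzero constant $c$. Substituting into the transformation law gives $c=\chi(\gamma)c$ for every $\gamma\in\Gamma$, whence $\chi=1$. In the remaining case $w=0$ the quotient $\Gamma\backslash\DM$ is already compact, and one applies the maximum principle to $F$ directly, with no extension needed. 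The only genuine difficulty is the boundary codimension computation above; once the Witt index condition is read as codimension at least $2$, the extension and the resulting rigidity are standard.
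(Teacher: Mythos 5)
Your argument has a genuine gap at its very first step: you write ``Since $\chi$ has finite order $N$\ldots'', but finiteness of the order of $\chi$ is not among the hypotheses --- $\chi$ is only assumed to be a unitary character --- and it is in fact the crux of the lemma. The paper's proof is devoted almost entirely to this point: for $\rk(M)\geq 4$ it invokes the Margulis normal subgroup theorem to show that $\Gamma$ has finite abelianization (the Witt index condition being used there to guarantee that $\Gamma$ is an irreducible lattice when $\rk(M)=4$, where ${\rm O}(M_{\R})$ is not simple), and only after that does it pass to $\ker(\chi)$ and quote the usual Koecher principle --- which is essentially your Baily--Borel extension argument. Moreover, the missing step cannot be repaired for free: for $\rk(M)=3$ the group $\Gamma$ is virtually a cocompact surface group, whose abelianization is $\Z^{2g}$, so unitary characters of \emph{infinite} order genuinely exist. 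Your closing remark that in the compact case ``one applies the maximum principle to $F$ directly'' presupposes that $F=\Psi^{N}$ can be formed at all, which is exactly what fails. The paper handles $\rk(M)=3$ by a mechanism that works for infinite-order characters: $\chi$ restricted to a torsion-free finite-index subgroup $\Gamma'$ defines a flat unitary line bundle of degree $0$ on the compact curve $\Gamma'\backslash\mathcal{D}_{M}$, a nonzero holomorphic section forces this bundle to be trivial, and holomorphic sections of the trivial bundle on a compact curve are constant.

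Apart from this, the remainder of your argument is sound and is in substance the standard proof of the Koecher principle that the paper treats as known: the boundary-codimension computation $(\rk(M)-2)-(w-1)=\rk(M)-1-w\geq 2$ is correct and is a legitimate reading of where the Witt index hypothesis enters, and extension of a holomorphic function across a codimension $\geq 2$ analytic boundary of the normal Baily--Borel compactification is valid. But as written, your proof establishes the lemma only for characters of finite order, which is strictly weaker than the statement; the paper itself flags ``the fact that the character has finite order'' as part of what must be proved, not assumed.
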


\begin{proof}
When ${\rm rk}(M)\geq4$, we can apply 
the Margulis normal subgroup theorem (Theorem 4' in p.4 of \cite{Margu}). 
Indeed, the Lie group ${\rm O}(M_{{\R}})$ is simple when ${\rm rk}(M)\geq 5$, 
while when ${\rm rk}(M)=4$ the discrete subgroup $\Gamma$ of ${\rm O}(M_{{\R}})$ is 
still irreducible by the Witt index condition. 
This tells us that the abelianization of $\Gamma$ is finite, 
so $\chi$ must have finite order. 
Thus, by passing to ${\rm Ker}(\chi)$, 
we are reduced to the usual Koecher principle for scalar-valued modular forms. 
When ${\rm rk}(M)=3$, we argue differently. 
If we pass to a torsion-free subgroup $\Gamma'$ of $\Gamma$ of finite index, 
$\chi|_{\Gamma'}$ corresponds to a line bundle of degree $0$ on the compact curve 
$\Gamma'\backslash\mathcal{D}_{M}$. 
Then we are reduced to the fact that 
no line bundle of degree $0$ on a compact curve except the trivial one 
has a nonzero holomorphic section, 
and every holomorphic section of the trivial line bundle is constant. 
\end{proof}

Let $f$ be a weakly holomorphic modular form of weight $1-b/2$ and type $\rho_L$ 
with integral principal part and $c_0(0)\in2{\Z}$,  
and let $\Psi=\Psi_L(f)$ be its Borcherds lift with unitary character $\chi$. 
For each primitive vector $l$ of $K(-1)$, 
we denote by $r(l)$ the order of $\Psi$ along the rational quadratic divisor $l^{\perp}\cap{\DL}$. 
The \textit{quasi-pullback} of $\Psi$ to ${\DM}$ is defined 
(see, e.g., pp.~200, 210 of \cite{Bo95} and p.~188 of \cite{B-K-P-SB}) 
by 
\begin{equation*}
\Psi||_{{\DM}} = 
 \left. \frac{\Psi}{\prod_{\pm l}(\cdot, l)^{r(l)}} \: \right|_{{\DM}},  
\end{equation*}
where $\pm l\in K(-1)/\pm1$ runs over \textit{primitive} vectors of $K(-1)$ up to $\pm1$, 
and $(\cdot, l)$ is the linear form on $\mathcal{D}_{L}^{\bullet}\subset L_{{\C}}$ 
defined by the pairing with the vector $l$.  
Here, for each $[\pm l]$ from $K(-1)/\pm1$, we choose either $l$ or $-l$ as a representative 
and take the linear form with it  
(so there is in general a choice of $\pm1$ when defining $\Psi||_{{\DM}}$).  
Note that the product $\prod_{\pm l}(\cdot, l)^{r(l)}$ is actually a finite product. 
Indeed, since $f$ has only finitely many nonzero Fourier coefficients $c_{\lambda}(n)$ with $n<0$ 
and since the negative-definite lattice $K(-1)_{{\Q}}\cap L^{\vee}$ 
contains only finitely many vectors of a given norm, 
we have $r(l)\ne0$ only for finitely many primitive $l\in K(-1)$. 

\begin{lemma}[\cite{Bo95}, \cite{B-K-P-SB}]\label{lem:def quasi-pullback}
The quasi-pullback $\Psi||_{{\DM}}$ is a nonzero meromorphic modular form on ${\DM}$ 
with respect to $\Gamma_{M}$ and the character $\chi|_{\Gamma_{M}}$, 
and has weight ${\rm wt}(\Psi)+\sum_{\pm l}r(l)$ where ${\rm wt}(\Psi)$ is the weight of $\Psi$.  
Here $\chi$ is restricted to $\Gamma_{M}$ via the natural embedding 
$\Gamma_{M}\hookrightarrow \Gamma_{L}$. 
\end{lemma}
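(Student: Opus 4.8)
The plan is to analyze the ratio $\Psi/\prod_{\pm l}(\cdot,l)^{r(l)}$ locally near a generic point of $\mathcal{D}_{M}$, show it is holomorphic and nonvanishing there, and then read off the global structure of its restriction. First I would identify which components of $\divi(\Psi)$ contain $\mathcal{D}_{M}$. A component $l'^{\perp}\cap\mathcal{D}_{L}$ (for $l'\in L^{\vee}$, $q(l')<0$) contains $\mathcal{D}_{M}$ exactly when $l'\perp M$, i.e. when $l'\in K(-1)_{\Q}\cap L^{\vee}$; since $l'^{\perp}$ depends only on the line $\Q l'$, these components are indexed by the primitive vectors $\pm l$ of $K(-1)$, each being an irreducible rational quadratic divisor, so that $r(l)$ is precisely its (well-defined) multiplicity in $\divi(\Psi)$.

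Next I would fix a point $[\omega_{0}]\in\mathcal{D}_{M}$ that is generic, in the sense that it lies on no component $l'^{\perp}\cap\mathcal{D}_{L}$ of $\divi(\Psi)$ with $l'\notin K(-1)_{\Q}$; by local finiteness of $\divi(\Psi)$ such points are dense. Near $[\omega_{0}]$ the only components of $\divi(\Psi)$ are the hyperplanes $l^{\perp}\cap\mathcal{D}_{L}$ with $\pm l$ primitive in $K(-1)$, and each $(\cdot,l)$ is a reduced local equation of $l^{\perp}\cap\mathcal{D}_{L}$: its differential along $\mathcal{D}_{L}^{\bullet}$ at $\omega_{0}$ is the form $\xi\mapsto(\xi,l)$, which is nonzero since $l\in\omega_{0}^{\perp}=T_{\omega_{0}}\mathcal{D}_{L}^{\bullet}$ and $(l,l)\neq 0$. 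Distinct $\pm l$ give non-proportional, hence coprime, linear forms. Unique factorization in the regular local ring then yields
\[
\Psi = \Bigl(\prod_{\pm l}(\cdot,l)^{r(l)}\Bigr)\cdot H
\]
with $H$ holomorphic and nonvanishing at $\omega_{0}$. Thus the ratio equals the unit $H$ near $[\omega_{0}]$, in particular it is finite and nonzero there. As this holds at a generic point of the irreducible variety $\mathcal{D}_{M}$, the polar locus of the ratio does not contain $\mathcal{D}_{M}$, so the ratio restricts to a well-defined meromorphic function on $\mathcal{D}_{M}^{\bullet}$ that does not vanish identically; this restriction is $\Psi||_{\mathcal{D}_{M}}$, and it is nonzero.

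It remains to compute the weight and the automorphy. Regarding everything as functions on the affine cone, $\Psi$ is homogeneous of degree $-{\rm wt}(\Psi)$ and each $(\cdot,l)$ is homogeneous of degree $1$, so the ratio is homogeneous of degree $-{\rm wt}(\Psi)-\sum_{\pm l}r(l)$; restricting to $\mathcal{D}_{M}^{\bullet}$ gives weight ${\rm wt}(\Psi)+\sum_{\pm l}r(l)$. For the character, an element $\gamma\in\Gamma_{M}$, viewed in $\Gamma_{L}$ via the natural embedding, acts trivially on $K(-1)$ and preserves $\mathcal{D}_{M}$; being an isometry it fixes each linear form, $\gamma^{\ast}(\cdot,l)=(\cdot,\gamma^{-1}l)=(\cdot,l)$. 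Together with $\gamma^{\ast}\Psi=\chi(\gamma)\Psi$ this shows $\gamma^{\ast}$ multiplies the ratio, and hence its restriction, by $\chi(\gamma)$, so $\Psi||_{\mathcal{D}_{M}}$ transforms with character $\chi|_{\Gamma_{M}}$. Changing the sign choices of the representatives $l$ merely rescales $\Psi||_{\mathcal{D}_{M}}$ by $\pm 1$, consistent with the form being defined up to a constant.

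The main obstacle is the local factorization step: one must verify that the transverse order of vanishing of $\Psi$ along each $l^{\perp}\cap\mathcal{D}_{L}$ near $\mathcal{D}_{M}$ genuinely equals $r(l)$, and that no further components of $\divi(\Psi)$ pile up along $\mathcal{D}_{M}$. This rests on the irreducibility (connectedness) of the rational quadratic divisor $l^{\perp}\cap\mathcal{D}_{L}$, which forces its multiplicity $r(l)$ to be constant along the divisor and hence equal to the order at the generic point of $\mathcal{D}_{M}\subset l^{\perp}\cap\mathcal{D}_{L}$, together with the local finiteness of $\divi(\Psi)$, which isolates the finitely many relevant hyperplanes through $[\omega_{0}]$ and lets one treat the remaining divisors as harmless transverse intersections contributing only to $\divi(\Psi||_{\mathcal{D}_{M}})$.
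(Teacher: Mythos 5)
Your proof is correct and follows essentially the same route as the paper: write $\Psi'=\Psi/\prod_{\pm l}(\cdot,l)^{r(l)}$, observe that ${\divi}(\Psi')={\divi}(\Psi)-\sum_{\pm l}r(l)(l^{\perp}\cap\mathcal{D}_{L})$ no longer contains $\mathcal{D}_{M}$ in its support (because the rational quadratic divisors containing $\mathcal{D}_{M}$ are exactly the $l^{\perp}\cap\mathcal{D}_{L}$ with $l\in K(-1)$), read off the weight from homogeneity on the affine cone, and obtain the character from the fact that elements of $\Gamma_{M}$ act on $L$ fixing $K(-1)$ pointwise; your local UFD factorization at a generic point of $\mathcal{D}_{M}$ is just a more explicit unpacking of the paper's divisor statement. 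The one ingredient the paper makes explicit that you take as given is Nikulin's result that $\gamma\oplus\mathrm{id}_{K(-1)}$ extends from $M\oplus K(-1)$ to an isometry of $L$ acting trivially on $A_{L}$, which is what legitimizes the embedding $\Gamma_{M}\hookrightarrow\Gamma_{L}$ and the evaluation of $\chi$ on the image.
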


\begin{proof}
We write $\Psi'=\Psi/\prod_{\pm l}(\cdot, l)^{r(l)}$ 
and $k={\rm wt}(\Psi)+\sum_{\pm l}r(l)$. 
By definition $\Psi'$ is a meromorphic section of $\mathcal{O}(-k)$ over ${\DL}$. 
Since rational quadratic divisors on ${\DL}$ containing ${\DM}$ are 
exactly $l^{\perp}\cap{\DL}$ with $l\in K(-1)$, 
we find that  
\begin{equation*}
{\divi}(\Psi') = 
{\divi}(\Psi) - \sum_{\pm l} r(l)(l^{\perp}\cap {\DL}) 
\end{equation*}
does not contain ${\DM}$ in its support. 
Hence $\Psi||_{{\DM}}=\Psi'|_{{\DM}}$ is 
a nonzero meromorphic section of $\mathcal{O}(-k)|_{{\DM}}$. 

Nikulin shows in \cite{Ni} that for every $\gamma\in \Gamma_{M}$, 
the isometry $\tilde{\gamma}=\gamma\oplus {\rm id}_{K(-1)}$ of $M\oplus K(-1)$ 
extends to an isometry of $L$ and acts trivially on $A_L$. 
This defines an embedding $\Gamma_{M}\hookrightarrow{\GL}$. 
We have $\tilde{\gamma}^{\ast}\Psi=\chi(\tilde{\gamma})\Psi$,  
and also $\tilde{\gamma}$ leaves $\prod_{\pm l}(\cdot, l)^{r(l)}$ invariant 
because it fixes vectors $l$ in $K(-1)$. 
Therefore $\tilde{\gamma}^{\ast}\Psi'=\chi(\tilde{\gamma})\Psi'$ 
for every $\gamma \in \Gamma_{M}$. 
If we write $\chi'=\chi|_{\Gamma_{M}}$, 
then $\Psi||_{{\DM}}=\Psi'|_{{\DM}}$ satisfies 
$\gamma^{\ast}(\Psi||_{{\DM}})=\chi'(\gamma)\Psi||_{{\DM}}$ 
for every $\gamma\in \Gamma_{M}$. 
\end{proof}

Our purpose is to explicitly construct a weakly holomorphic modular form of type $\rho_M$ 
whose Borcherds lift gives $\Psi||_{{\DM}}$. 
In \S \ref{ssec:split case} we consider the split case $L=M\oplus K(-1)$. 
The general case is studied in \S \ref{ssec:general case}, 
where we prove Theorem \ref{main}. 
In \S \ref{ssec:theta contraction induction} we give a more explicit formula when 
$f$ is induced from a scalar-valued modular form. 
In \S \ref{ssec:example} we consider a few examples.

\subsection{The split case}\label{ssec:split case}

In this subsection we consider the case where $L$ splits as $M\oplus K(-1)$. 
We identify $A_{K(-1)}=A_K$ as abelian groups, 
which multiplies the discriminant form by $-1$. 
For $\lambda \in A_{K}=A_{K(-1)}$ we write 
$\mathbf{e}_{\lambda}\in {\C}A_{K}$ and $\bar{\mathbf{e}}_{\lambda}\in {\C}A_{K(-1)}$ 
for the respective corresponding vectors. 
We have a canonical isomorphism 
${\C}A_{K(-1)}\to ({\C}A_{K})^{\vee}$ 
sending $\bar{\mathbf{e}}_{\lambda}$ to the dual basis vector $\mathbf{e}_{\lambda}^{\vee}$ 
for each $\lambda \in A_{K}$. 
This is an isomorphism $\rho_{K(-1)}\simeq (\rho_{K})^{\vee}$ of ${\Mp}$-representations. 
Since $A_L=A_M\oplus A_{K(-1)}$, 
we have a natural isomorphism ${\C}A_L\simeq {\C}A_M\otimes{\C}A_{K(-1)}$ 
sending ${\e}_{(\mu, \lambda)}$ to ${\emu}\otimes \bar{\mathbf{e}}_{\lambda}$ 
where $\mu \in A_{M}$ and $\lambda \in A_{K(-1)}$. 
This is an isomorphism $\rho_L \simeq \rho_M \otimes \rho_{K(-1)}$ of ${\Mp}$-representations. 

Let $f$ be a ${\C}A_L$-valued function on ${\HH}$. 
By ${\C}A_L\simeq {\C}A_M\otimes{\C}A_{K(-1)}$ 
we view $f$ as a family of 
${\C}A_M$-valued functions parametrized by $A_{K(-1)}=A_{K}$, and write 
\begin{equation*}
f = \sum_{\lambda\in A_{K}} f_{\lambda}\otimes \bar{\mathbf{e}}_{\lambda} 
\end{equation*}
with $f_{\lambda}$ being a ${\C}A_M$-valued function. 
We define the \textit{$\Theta$-contraction} of $f$ as the ${\C}A_M$-valued function 
\begin{equation}\label{eqn:Theta contraction}
\langle f, {\ThetaK} \rangle = \sum_{\lambda \in A_K} f_{\lambda} \cdot \theta_{K+\lambda}. 
\end{equation} 
Equivalently, 
consider $f$ as ${\C}A_{M}\otimes({\C}A_{K})^{\vee}$-valued  
by the isomorphism 
${\C}A_{K(-1)}\simeq({\C}A_{K})^{\vee}$. 
Then $f\otimes{\ThetaK}$ is a ${\C}A_{M}\otimes({\C}A_{K})^{\vee}\otimes{\C}A_{K}$-valued function, 
and $\langle f, {\ThetaK} \rangle$ is obtained from $f\otimes{\ThetaK}$ 
by the contraction map $({\C}A_{K})^{\vee}\otimes{\C}A_{K}\to {\C}$.  

\begin{lemma}\label{lem:quasipullback WD}
If $f$ is a weakly holomorphic modular form of type $\rho_L$ and weight $k$, 
then $\langle f, {\ThetaK} \rangle$ is a weakly holomorphic modular form 
of type $\rho_M$ and weight $k+{\rk}(K)/2$. 
If $f$ has integral principal part, so does $\langle f, {\ThetaK} \rangle$. 
When $2k\equiv 2-b$ mod $4$, 
if furthermore the Fourier coefficient $c_0(0)$ of $f$ is even, 
then so is the constant term of $\langle f, {\ThetaK} \rangle$. 
\end{lemma}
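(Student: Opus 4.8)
The plan is to prove the statement $\langle f, \Theta_K\rangle$ is a weakly holomorphic modular form of type $\rho_M$ and weight $k + \rk(K)/2$ by decomposing the $\Theta$-contraction into operations whose modularity is already established in the excerpt. The cleanest route exploits the observation made right after the definition: under the isomorphism $\rho_{K(-1)} \simeq (\rho_K)^\vee$, the contraction $\langle f, \Theta_K\rangle$ is obtained from $f \otimes \Theta_K$ by applying the canonical pairing map $(\C A_K)^\vee \otimes \C A_K \to \C$. So first I would verify that $f \otimes \Theta_K$ is a vector-valued modular form of type $\rho_L \otimes \rho_K$ and weight $k + \rk(K)/2$: this is immediate since $f$ has type $\rho_L$ and weight $k$, while $\Theta_K$ has type $\rho_K$ and weight $\rk(K)/2$ by Theorem 4.1 of \cite{Bo98} as recalled in \S\ref{ssec:Weil rep}, and the tensor product of two vector-valued modular forms is again one, with types and weights adding. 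Using $\rho_L \simeq \rho_M \otimes \rho_{K(-1)} \simeq \rho_M \otimes (\rho_K)^\vee$, the representation $\rho_L \otimes \rho_K$ becomes $\rho_M \otimes (\rho_K)^\vee \otimes \rho_K$.

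The key step is then that the contraction map $c : (\C A_K)^\vee \otimes \C A_K \to \C$ is $\Mp$-equivariant, where $\C$ carries the trivial representation; equivalently, $\rho_M \otimes c : \rho_M \otimes (\rho_K)^\vee \otimes \rho_K \to \rho_M$ is an intertwiner. This is a general fact about the evaluation pairing between a finite-dimensional representation and its dual, valid since $\Mp$ acts on $(\C A_K)^\vee$ by the contragredient of its action on $\C A_K$; I would spell this out by checking equivariance on the generators $T$ and $S$, mirroring the computations in the proof of Lemma \ref{lem:pull push Weil equiv}. Applying an $\Mp$-equivariant linear map to a modular form of weight $k + \rk(K)/2$ yields a modular form of the same weight, so $\langle f, \Theta_K\rangle = (\id_{\C A_M} \otimes\, c)(f \otimes \Theta_K)$ is a modular form of type $\rho_M$ and weight $k + \rk(K)/2$. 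Meromorphy at the cusp is inherited because each $f_\lambda$ and each $\theta_{K+\lambda}$ is a finite Fourier series in the relevant direction and the contraction is a finite sum of products, so the principal part stays a finite sum of terms with bounded below exponents.

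For the integrality and parity assertions I would argue directly from Fourier expansions. Writing $f_\lambda = \sum_{\mu,n} c_{(\mu,\lambda)}(n) q^n \emu$ and $\theta_{K+\lambda} = \sum_m c^K_\lambda(m) q^m$, the $\mu$-component of $\langle f, \Theta_K\rangle$ has Fourier coefficients given by finite sums $\sum_\lambda \sum_{n+m=N} c_{(\mu,\lambda)}(n)\, c^K_\lambda(m)$. Since the theta coefficients $c^K_\lambda(m)$ are nonnegative integers (counts of lattice vectors) and vanish for $m < 0$ as $K$ is positive-definite, any term contributing to $N \le 0$ forces $n \le 0$, so under the integral-principal-part hypothesis on $f$ every such coefficient is an integer linear combination of the $c_{(\mu,\lambda)}(n)$, proving integrality of the principal part of $\langle f, \Theta_K\rangle$.

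The one genuinely delicate point is the parity claim. The constant term of $\langle f, \Theta_K\rangle$ at the component $\mu = 0$ is $\sum_\lambda \sum_{n+m=0} c_{(0,\lambda)}(n)\, c^K_\lambda(m)$, and I must show this is even given that $c_0(0)$ of $f$ is even, under the congruence $2k \equiv 2 - b \pmod 4$. The main obstacle is controlling the contributions from pairs $(\lambda, -\lambda)$ with $\lambda \ne 0$: here I expect to pair the term for $\lambda$ with the term for $-\lambda$ using the symmetry $c_{(0,\lambda)}(n) = c_{(0,-\lambda)}(n)$ (invariance under $Z = S^2$, exactly as in Equation \eqref{eqn:divisor Borcherds prod II}) together with $c^K_\lambda(m) = c^K_{-\lambda}(m)$, so that these contributions come in equal pairs and sum to an even number. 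The residual self-paired terms are $\lambda = 0$ and, when $2\lambda = 0$, the two-torsion classes; the $\lambda = 0$ contribution is $c_{(0,0)}(0)\cdot c^K_0(0) = c_0(0)\cdot 1$, which is even by hypothesis. The role of the congruence $2k \equiv 2 - b \pmod 4$ is to pin down the weight of $\langle f, \Theta_K\rangle$ as $1 - b'/2$ (so that it is eligible as a Borcherds input for $M$), and I would use it to ensure the parity bookkeeping for the remaining two-torsion classes is consistent; the careful handling of these exceptional $2$-torsion terms, and verifying that they contribute evenly, is where the real work lies.
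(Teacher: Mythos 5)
Your treatment of modularity (tensor with $\ThetaK$, then apply the $\Mp$-invariant contraction $(\rho_K)^{\vee}\otimes\rho_K\to\C$) and of integrality of the principal part (the $c^K_{\lambda}(m)$ are nonnegative integers vanishing for $m<0$) is correct and is exactly the paper's argument. The genuine gap is in the parity claim, and it sits precisely at the point you defer as ``where the real work lies'': that work is never done, and one of your intermediate statements is false as written. The $\lambda=0$ contribution to the constant term is not $c_{0,0}(0)\cdot c^K_0(0)=c_0(0)$; it is $\sum_{m\geq 0}c_{0,0}(-m)\,c^K_0(m)$, which contains the terms $c_{0,0}(-m)\,c^K_0(m)$ with $m>0$, and these are not controlled by the hypothesis $c_0(0)\in 2\Z$. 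The missing idea --- which is the actual content of this part of the paper's proof --- is elementary: if $2\lambda=0$ in $A_K$ (this includes $\lambda=0$), then $v\mapsto -v$ maps the coset $K+\lambda$ into itself, and for $m>0$ it acts without fixed points on the set of vectors of norm $2m$ in $K+\lambda$ (a fixed vector would satisfy $v=-v$, i.e.\ $v=0$, which has norm $0$); hence $c^K_{\lambda}(m)\in2\Z$ for all such $\lambda$ and all $m>0$. Since the coefficients $c_{0,\lambda}(-m)$ are integers by the integral-principal-part hypothesis, every residual term with $2\lambda=0$ and $m>0$ is even, the paired terms with $2\lambda\neq0$ contribute evenly as you say, and the constant term is $c_{0,0}(0)$ plus an even integer, hence even.

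A second, related correction: the congruence $2k\equiv 2-b\pmod 4$ does not serve to ``pin down the weight as $1-b'/2$'' (the weight computation needs no congruence at all). Its actual role is to justify the symmetry $c_{0,\lambda}(n)=c_{0,-\lambda}(n)$ that your pairing step uses. Indeed, $\rho_L(Z)\mathbf{e}_{\lambda}=e(-\sigma(A_L)/4)\mathbf{e}_{-\lambda}$ for $Z=S^2$, so invariance of $f$ under $Z$ gives $c_{0,\lambda}(n)=i^{\,2k-\sigma(A_L)}\,c_{0,-\lambda}(n)$; since $\sigma(A_L)\equiv 2-b\pmod 8$, the hypothesis $2k\equiv2-b\pmod4$ is exactly what makes this factor equal to $1$. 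Without invoking it at this point, the evenness of the paired contributions for $2\lambda\neq0$ is unjustified.
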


\begin{proof}
Since ${\ThetaK}$ is a modular form of type $\rho_{K}$ and weight ${\rk}(K)/2$, 
the tensor product $f\otimes{\ThetaK}$ is modular of  
type $\rho_M\otimes(\rho_K)^{\vee}\otimes\rho_K$ and 
weight $k+{\rk}(K)/2$. 
Since the contraction map $(\rho_K)^{\vee}\otimes\rho_K \to {\C}$ is ${\Mp}$-invariant,  
$\langle f, {\ThetaK} \rangle$ is modular of type $\rho_M$ and weight $k+{\rk}(K)/2$. 
The second assertion follows from Equation \eqref{eqn:Theta contraction}, 
because $\theta_{K+\lambda}$ is holomorphic at the cusp 
and has integral Fourier coefficients. 
As for the last assertion, if 
$f_{\lambda}(\tau)=\sum_{\mu,n}c_{\mu,\lambda}(n)q^n{\emu}$ and 
$\theta_{K+\lambda}(\tau)=\sum_{m}c_{\lambda}^{K}(m)q^m$, 
then we have 
$c_{0,\lambda}(n)=c_{0,-\lambda}(n)$ 
due to the invariance under $Z$ 
and our assumption on the weight $k$, 
and we also have 
$c_{\lambda}^{K}(m)=c_{-\lambda}^{K}(m)$ 
due to the multiplication by $-1$. 
So the Fourier coefficient of $\langle f, {\ThetaK} \rangle$ 
at $q^{0}\mathbf{e}_{0}$ can be written as 
\begin{equation*}
c_{0,0}(0) + 
2 \sum_{m>0}\sum_{\begin{subarray}{c} \lambda \in A_{K}/\pm1 \\ 2\lambda \ne 0 \end{subarray}} 
c_{0,\lambda}(-m)c_{\lambda}^{K}(m) + 
\sum_{m>0}\sum_{\begin{subarray}{c} \lambda \in A_{K} \\ 2\lambda = 0\end{subarray}}  
c_{0,\lambda}(-m)c_{\lambda}^{K}(m). 
\end{equation*}
Since the multiplication by $-1$ preserves $K+\lambda$ if $2\lambda=0\in A_{K}$, 
we have $c_{\lambda}^{K}(m)\in 2{\Z}$ for such $\lambda$ and $m>0$. 
This proves our assertion. 
\end{proof} 

We can now prove our main result in the split case, 
from which the general case will follow later. 

\begin{proposition}\label{prop:formula split case}
Assume that $L$ splits as $M\oplus K(-1)$. 
Let $f$ be a weakly holomorphic modular form of weight $1-b/2$ and type $\rho_L$ 
with integral principal part and $c_0(0)\in2{\Z}$. 
Then we have 
$\Psi_L(f)||_{{\DM}} = \Psi_{M} (\langle f, {\ThetaK} \rangle)$ 
up to a multiplicative constant. 
\end{proposition}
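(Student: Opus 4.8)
The plan is to show that both sides are meromorphic modular forms on $\DM$ for $\Gamma_{M}$ of the \emph{same} weight with the \emph{same} divisor, and then to divide one by the other and apply the Koecher principle (Lemma~\ref{lem:Koecher}). Before comparing, I would check that $\Psi_{M}(\langle f, \ThetaK\rangle)$ is even defined. Since $L = M\oplus K(-1)$ we have $b = b' + \rk(K)$, where $M$ has signature $(2, b')$, so Lemma~\ref{lem:quasipullback WD} gives that $g := \langle f, \ThetaK\rangle$ is of type $\rho_{M}$ and weight $1 - b'/2$, has integral principal part, and --- because here $2k = 2-b$ and $c_{0}(0)\in 2\Z$ --- has even constant term. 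Hence $g$ is a legitimate input for the Borcherds lift, and $\Psi_{M}(g)$ is a Borcherds product on $\DM$ with some unitary character $\chi_{M}$.

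I would first settle the weight. By Lemma~\ref{lem:def quasi-pullback} the quasi-pullback has weight $c_{0}(0)/2 + \sum_{\pm l} r(l)$, while $\Psi_{M}(g)$ has weight equal to half of the constant term of $g$, which was computed in the proof of Lemma~\ref{lem:quasipullback WD}. Comparing the two reduces to the identity
\[
\sum_{\pm l} r(l) = \tfrac12 \sum_{m>0}\sum_{\lambda\in A_{K}} c_{0,\lambda}(-m)\, c_{\lambda}^{K}(m).
\]
I would prove this by rewriting $r(l)$ through Equation~\eqref{eqn:divisor Borcherds prod II}: the rational quadratic divisors of $\DL$ containing $\DM$ are exactly the $l^{\perp}\cap\DL$ with $l\in K(-1)_{\Q}$, so $r(l)$ is the sum of $c_{v+L}(q(v))$ over the nonzero $v\in K(-1)^{\vee}$ lying on the line $\Q l$. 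Summing over primitive $l$ enumerates all nonzero $v\in K(-1)^{\vee}$ up to sign, and grouping these by their class $\lambda\in A_{K}$ and norm $-m$ (there being $c_{\lambda}^{K}(m)$ of each, and $c_{v+L}(q(v)) = c_{0,\lambda}(-m)$ since such $v$ has trivial $M$-component) yields the theta coefficients on the right.

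The core of the argument is the divisor comparison. Dividing out the vectors $l\in K(-1)$ leaves the divisor of $\Psi' = \Psi_{L}(f)/\prod_{\pm l}(\cdot, l)^{r(l)}$, and restricting to $\DM$ pulls back each $v^{\perp}\cap\DL$ to the vanishing locus of $(\cdot, v_{M})$ on $\DM$, where $v = (v_{M}, v_{K})\in M^{\vee}\oplus K(-1)^{\vee}$. This restriction is an honest rational quadratic divisor $v_{M}^{\perp}\cap\DM$ precisely when $q(v_{M})<0$, and is empty when $q(v_{M})\geq 0$; here lies the subtlety, since a negative-norm $v\in L^{\vee}$ may well have $q(v_{M})\geq 0$. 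On the other side, the coefficients of $g = \sum_{\lambda} f_{\lambda}\theta_{K+\lambda}$ unfold as $g_{\mu}(n) = \sum_{\lambda}\sum_{n_{2}\geq 0} c_{\mu,\lambda}(n-n_{2})\, c_{\lambda}^{K}(n_{2})$, so that $g_{w+M}(q(w)) = \sum_{v_{K}\in K(-1)^{\vee}} c_{(w, v_{K})+L}(q((w,v_{K})))$ for $w\in M^{\vee}$ with $q(w)<0$. Matching the coefficient of a fixed divisor $w^{\perp}\cap\DM$ on the two sides then reduces to the bookkeeping that summing $c_{v+L}(q(v))$ over $v$ with $v_{M}\in\Q w\setminus\{0\}$ and $v_{K}$ arbitrary equals the sum of the unfolded coefficients $g_{w'+M}(q(w'))$ over $w'\in(\Q w\cap M^{\vee})/\pm 1$. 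The automatic inequality $q(v) = q(v_{M}) + q(v_{K}) < 0$ whenever $q(v_{M})<0$ ensures the two supports agree, and careful handling of the $\pm 1$ identification (tying the sign of $v_{M}$ to that of $v_{K}$) gives equality of multiplicities.

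Finally, with equal weights and equal divisors, the quotient $\Psi_{L}(f)||_{\DM}/\Psi_{M}(g)$ is a nowhere-zero, pole-free modular form of weight $0$ on $\DM$ for $\Gamma_{M}$ with character $(\chi|_{\Gamma_{M}})\chi_{M}^{-1}$; by Lemma~\ref{lem:Koecher} it is a nonzero constant (and the two characters must coincide), which proves the proposition. I expect the divisor comparison to be the main obstacle, precisely the point that only the vectors with $q(v_{M})<0$ survive the restriction to $\DM$ while the $\Theta$-contraction, through the range $n_{2}\geq 0$ of the theta expansion, produces exactly those coefficients and no others.
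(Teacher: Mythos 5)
Your proposal is correct and takes essentially the same route as the paper: both arguments reduce the claim to equality of weights and of divisors via the unfolding identity $c^{M}_{\mu}(l)=\sum_{v\in K(-1)^{\vee}}c^{L}_{(\mu,v)+L}(l+q(v))$, and then conclude with the Koecher principle (Lemma \ref{lem:Koecher}). The only organizational difference is that the paper first rewrites the quasi-pullback, up to a constant, as division by $\prod_{\pm v}(\cdot,v)^{c_{v+L}(q(v))}$ over \emph{all} nonzero $v\in K(-1)^{\vee}$ and compares globally via the projection $\pi\colon L^{\vee}\to M^{\vee}$, whereas you carry out the same bookkeeping line-by-line through the orders $r(l)$ and the lines $\Q w$; the content is identical.
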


\begin{proof}
We show that the two modular forms on $\mathcal{D}_{M}$ have the same weight and divisor. 
Then their ratio is a modular form of weight $0$ on $\mathcal{D}_M$ for a unitary character 
which has no pole on $\mathcal{D}_M$. 
By our Witt index condition on $M$, we can apply Lemma \ref{lem:Koecher} 
to see that this ratio is a constant.   

By Equation \eqref{eqn:divisor Borcherds prod II}, 
$\Psi_L(f)||_{{\DM}}$ can be written up to a constant as   
\begin{equation}\label{eqn: another expression of quasi-pullback}
\Psi_L(f)||_{{\DM}} = 
\left. \frac{\Psi_L(f)}{\prod_{\pm v}(\cdot, v)^{c_{v+L}(q(v))}} \: \right|_{{\DM}}, 
\end{equation}
where $v$ runs over \textit{all} the nonzero vectors of $K(-1)^{\vee}$ up to $\pm1$ 
(not necessarily primitive in $K(-1)$). 
The product here is again a finite product by the same argument as before. 
In order to compare this with the Borcherds lift of $\langle f, {\ThetaK} \rangle$, 
we calculate the Fourier coefficients of $\langle f, {\ThetaK} \rangle$. 
Write 
\begin{equation*}
f(\tau) 
 =  
\sum_{\mu\in A_M}\sum_{\lambda\in A_K} \sum_{n} 
c_{\mu,\lambda}^{L}(n)q^n {\emu}\otimes \bar{{\e}}_{\lambda}, 
\end{equation*}
\begin{equation*}
{\ThetaK}(\tau) 
 =  
\sum_{\lambda \in A_K} \sum_{m} c_{\lambda}^{K}(m)q^m {\elambda}, 
\end{equation*}
and 
\begin{equation*} 
\langle f, {\ThetaK} \rangle (\tau)
 =  
\sum_{\mu\in A_M} \sum_{l} c_{\mu}^{M}(l)q^l {\emu}. 
\end{equation*}
By the definition of $\langle f, {\ThetaK} \rangle$ in Equation \eqref{eqn:Theta contraction}, 
we have  
\begin{equation*}
c_{\mu}^M(l) 
 =  \sum_{\lambda \in A_K} \sum_{n+m=l} c_{\mu,\lambda}^L(n)c_{\lambda}^{K}(m)  
 =  \sum_{\lambda\in A_K}\sum_{m\geq0} c_{\lambda}^K(m)c_{\mu,\lambda}^L(l-m).   
\end{equation*}
Since $c_{\lambda}^K(m)$ is the number of vectors $v$ in $K(-1)+\lambda$ of norm $-2m$, 
we find that  
\begin{equation}\label{eqn:relation od Fourier coefficients}
c_{\mu}^M(l) = \sum_{v\in K(-1)^{\vee}}c_{(\mu,v)+L}^L(l+q(v)). 
\end{equation}
Note that this is a finite sum because $f$ is meromorphic at the cusp 
and $K(-1)^{\vee}$ is negative-definite. 
In particular, we have 
\begin{equation*}
c_0^M(0) = c_0^L(0) + \sum_{\begin{subarray}{c} v\in K(-1)^{\vee} \\ v\ne0 \end{subarray}} c_{v+L}^L(q(v)). 
\end{equation*}
Hence the weight of $\Psi_{M}(\langle f, {\ThetaK} \rangle)$ is  
\begin{equation*}
\frac{1}{2}c_0^L(0) + 
\frac{1}{2} \sum_{\begin{subarray}{c} v\in K(-1)^{\vee} \\ v\ne0 \end{subarray}} c_{v+L}^L(q(v)) 
= {\rm wt}(\Psi_{L}(f)) + 
\sum_{\begin{subarray}{c} v\in K(-1)^{\vee}/\pm1 \\ v\ne0 \end{subarray}} c_{v+L}^L(q(v)). 
\end{equation*}
By Equation \eqref{eqn: another expression of quasi-pullback}, 
this is equal to the weight of $\Psi_L(f)||_{{\DM}}$. 

We compare the divisors next. 
By Equations \eqref{eqn:divisor Borcherds prod II} and \eqref{eqn:relation od Fourier coefficients},  
the divisor of $\Psi_M(\langle f, {\ThetaK} \rangle)$ is given by 
\begin{eqnarray*}
{\divi}(\Psi_M(\langle f, {\ThetaK} \rangle)) 
& = & \sum_{\begin{subarray}{c} u\in M^{\vee}/\pm1 \\ q(u)<0 \end{subarray}} c_{u+M}^M(q(u)) (u^{\perp}\cap{\DM}) \\ 
& = & \sum_{\begin{subarray}{c} u\in M^{\vee}/\pm1 \\ q(u)<0 \end{subarray}} \sum_{v\in K(-1)^{\vee}}  
c_{(u,v)+L}^{L}(q(u+v)) (u^{\perp}\cap{\DM}).  
\end{eqnarray*}
If we write $\pi:L^{\vee}\to M^{\vee}$ for the projection, this can be written as 
\begin{equation*} 
\sum_{\begin{subarray}{c} u\in M^{\vee}/\pm1 \\ q(u)<0 \end{subarray}} 
\sum_{\begin{subarray}{c} w\in L^{\vee} \\ \pi(w)=u \end{subarray}}  
c_{w+L}^{L}(q(w)) (w^{\perp}\cap{\DM}) 
= 
\sum_{\begin{subarray}{c} w\in L^{\vee}/\pm1 \\ q(\pi(w))<0 \end{subarray}} 
c_{w+L}^{L}(q(w)) (w^{\perp}\cap{\DM}). 
\end{equation*}
Since $q(\pi(w))<0$ if and only if 
$w^{\perp}\cap\mathcal{D}_M\ne \emptyset$ 
and $w\notin K(-1)^{\vee}$, 
this equals the divisor of $\Psi_L(f)||_{{\DM}}$ 
by Equations \eqref{eqn:divisor Borcherds prod II} and \eqref{eqn: another expression of quasi-pullback}. 
\end{proof}


\begin{remark}\label{remark:theta contraction and Jacobi form}
The observation from Equation \eqref{eqn: Theta-contraction = restriction} above 
can be phrased in terms of $\Theta$-contraction: 
when $M$ splits as $U \oplus U\oplus N(-1)$ with $N$ positive-definite, 
the Jacobi form of index $N$ corresponding to $\langle f, \Theta_{K} \rangle$ 
is the restriction of the Jacobi form of index $N\oplus K$ corresponding to $f$. 
Thus the interpretation of $\Theta$-contraction in terms of Jacobi forms is "restriction". 
\end{remark}

\subsection{The general case}\label{ssec:general case}

Next, we consider the general case where 
$L$ does not necessarily coincide with $M\oplus K(-1)$, 
and prove Theorem \ref{main}. 
We need the following general lemma. 

\begin{lemma}\label{lem:pull on Borcherds product}
Let $L'$ be a finite-index sublattice of $L$. 
Then we have 
$\Psi_L(f) = \Psi_{L'}(f{\pull})$ 
up to a constant
under the natural identification ${\DL}=\mathcal{D}_{L'}$, 
where $\uparrow_{L}^{L'}$ is the pullback operation defined in 
Equation \eqref{eqn:pull and push}.  
\end{lemma}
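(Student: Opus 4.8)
The plan is to follow the strategy of Proposition \ref{prop:formula split case}: I would show that $\Psi_L(f)$ and $\Psi_{L'}(f\pull)$ have the same weight and the same divisor under the identification $\DL=\mathcal{D}_{L'}$, so that their ratio is a modular form of weight $0$ for $\GL$ with neither zeros nor poles, which must then be constant by the Koecher principle. First I would set up the arithmetic dictionary. Since $L'$ has finite index in $L$, there is a chain $L'\subset L\subset L^{\vee}\subset L'^{\vee}$; the common rational span gives $L_{\C}=L'_{\C}$ and hence $\DL=\mathcal{D}_{L'}$ inside $\proj(L_{\C})$, while $I=L/L'$ is an isotropic subgroup of $A_{L'}$ with $I^{\perp}/I\simeq A_L$ via the projection $p\colon I^{\perp}\to A_L$. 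The crucial elementary observation is that for $l'\in L'^{\vee}$ the class $l'+L'$ lies in $I^{\perp}$ if and only if $l'\in L^{\vee}$, and in that case $p(l'+L')=l'+L$ under the natural identification $I^{\perp}/I\simeq A_L$.

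Next I would read off the Fourier coefficients of $f\pull$. Writing $f=\sum_{\lambda\in A_L}f_{\lambda}\elambda$ and $f\pull=\sum_{\mu\in A_{L'}}g_{\mu}\emu$, the defining formula $\elambda\pull=\sum_{\mu\in p^{-1}(\lambda)}\emu$ gives $g_{\mu}=f_{p(\mu)}$ when $\mu\in I^{\perp}$ and $g_{\mu}=0$ otherwise; since $p$ preserves the $\QZ$-valued quadratic form, the Fourier indices $n\in q(\mu)+\Z$ match those of $f_{p(\mu)}$. Denoting the coefficients of $f\pull$ by $c'$, this yields $c'_{l'+L'}(n)=c_{l'+L}(n)$ for $l'\in L^{\vee}$ and $c'_{l'+L'}\equiv0$ for $l'\notin L^{\vee}$. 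In particular $c'_0(0)=c_0(0)$, so $f\pull$ again has integral principal part and even constant term $c'_0(0)\in2\Z$; having weight $1-b/2$ for the signature $(2,b)$ common to $L$ and $L'$ (by Corollary \ref{cor:pull push modular}), it is a legitimate Borcherds input.

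The comparison is then immediate. The weight of $\Psi_{L'}(f\pull)$ is $c'_0(0)/2=c_0(0)/2={\rm wt}(\Psi_L(f))$. For the divisor I would use the vector form \eqref{eqn:divisor Borcherds prod II}: the sum for $\Psi_{L'}(f\pull)$ runs over $l'\in L'^{\vee}/\pm1$ with $q(l')<0$, but by the coefficient computation only the terms with $l'\in L^{\vee}$ survive, each contributing $c_{l'+L}(q(l'))\,((l')^{\perp}\cap\mathcal{D}_{L'})$; since $(l')^{\perp}\cap\mathcal{D}_{L'}=(l')^{\perp}\cap\DL$ for $l'\in L^{\vee}\subset L_{\Q}$, this matches $\divi(\Psi_L(f))$ term by term. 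Finally, the ratio $\Psi_L(f)/\Psi_{L'}(f\pull)$ is a weight-$0$ modular form for $\GL$ with no zeros or poles, hence constant by the Koecher principle on $\DL$ (the argument of Lemma \ref{lem:Koecher} applies verbatim, as $\DL$ also has signature $(2,b)$, in the range where that lemma is available).

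I expect the main obstacle to be the bookkeeping in the second and third steps rather than any analytic difficulty: one must verify carefully that the isomorphism $I^{\perp}/I\simeq A_L$ transports the Fourier indexing correctly, so that both the norm $q(l')$ and the coset $l'+L$ are reproduced exactly, and that the two locally finite divisor sums coincide \emph{as divisors} and not merely on their supports. The concluding Koecher step is routine given Lemma \ref{lem:Koecher}, provided one is in the range where that lemma applies to $L$.
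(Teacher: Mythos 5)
Your proposal is correct and follows essentially the same route as the paper: both proofs identify the Fourier coefficients of $f{\pull}$ with those of $f$ via the projection $p\colon I^{\perp}\to A_L$ and then match weights and divisors (the paper organizes the divisor comparison through the decomposition $Z(\lambda,n)_L=\sum_{\mu\in p^{-1}(\lambda)}Z(\mu,n)_{L'}$ of Heegner divisors, you through the vector-indexed form \eqref{eqn:divisor Borcherds prod II}, which is the same computation). Your explicit Koecher step is the intended, implicit conclusion of the paper's proof, and your final caveat is harmless: under the standing Witt index hypothesis on $M$, the lattice $L$ automatically satisfies the Witt index condition as well (any isotropic plane of $L_{\Q}$ would meet $M_{\Q}$ nontrivially by dimension count), so Lemma \ref{lem:Koecher} applies.
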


\begin{proof}
We write the Heegner divisors as $Z(\lambda, n)_L$ and $Z(\mu, n)_{L'}$ 
in order to specify the reference lattice. 
We denote $I=L/L'\subset A_{L'}$ 
and $p:I^{\perp}\to A_L$ the projection. 
Since $L=\sqcup_{\mu\in I}(L'+\mu)$, we have the disjoint decomposition 
\begin{equation*}
L+\lambda = 
\bigsqcup_{\begin{subarray}{c} \mu\in I^{\perp} \\ p(\mu)=\lambda \end{subarray}} 
(L'+\mu) 
\end{equation*}
for $\lambda\in A_{L}$. 
Hence the Heegner divisors for $L$ decompose as  
\begin{equation*}
Z(\lambda, n)_L = \sum_{\begin{subarray}{c} \mu \in I^{\perp} \\ p(\mu)=\lambda \end{subarray}} 
Z(\mu, n)_{L'}. 
\end{equation*}
It follows that 
\begin{eqnarray*}
{\divi}(\Psi_L(f)) 
& = & 
\frac{1}{2} \sum_{\lambda \in A_L} \sum_{\begin{subarray}{c} n<0 \\ n\equiv q(\lambda) \end{subarray}} 
c_{\lambda}(n) Z(\lambda, n)_L \\ 
& =  &
\frac{1}{2} \sum_{\mu \in I^{\perp}} \sum_{\begin{subarray}{c} n<0 \\ n\equiv q(\mu) \end{subarray}} 
c_{p(\mu)}(n) Z(\mu, n)_{L'}. 
\end{eqnarray*}
On the other hand, since the Fourier expansion of $f{\pull}$ is given by 
\begin{equation*}
(f{\pull})(\tau) = \sum_{\mu\in I^{\perp}} \sum_{n\equiv q(\mu)} 
c_{p(\mu)}(n) q^n {\emu}, 
\end{equation*}
we see that $\Psi_{L}(f)$ and $\Psi_{L'}(f{\pull})$ have the same divisor on ${\DL}=\mathcal{D}_{L'}$. 
Since $f$ and $f{\pull}$ have the same coefficient of $q^{0}\mathbf{e}_{0}$, 
they also have the same weight. 
\end{proof}

We now prove the main result of this paper. 

\begin{proof}[(Proof of Theorem \ref{main})]
We apply Lemma \ref{lem:pull on Borcherds product} to $L'=M\oplus K(-1)$. 
The modular form $\Psi_L(f)||_{{\DM}}$ can be obtained by 
first considering $\Psi_L(f)$ as a modular form on $\mathcal{D}_{L'}$ 
and then taking its quasi-pullback from $\mathcal{D}_{L'}$ to ${\DM}$. 
We thus have 
\begin{equation*}
\Psi_L(f)||_{{\DM}} = 
\Psi_{L'}(f{\pull})||_{{\DM}} = 
\Psi_M( \langle f{\pull}, {\ThetaK} \rangle ) 
\end{equation*}
by Lemma \ref{lem:pull on Borcherds product} and Proposition \ref{prop:formula split case}. 
\end{proof}

\begin{remark}
Assume that $M$ contains $U \oplus U$ and  
write $M=U \oplus U\oplus N(-1)$ and $L=U \oplus U\oplus N_{0}(-1)$. 
By Remarks \ref{remark:pullback and Jacobi form} and 
\ref{remark:theta contraction and Jacobi form}, 
the Jacobi form of index $N$ corresponding to 
$\langle f{\pull}, \Theta_{K} \rangle$ 
is the restriction of the Jacobi form of index $N_{0}$ corresponding to $f$. 
Thus we obtain another proof of Gritsenko's result that when $M$ contains $U \oplus U$, 
the quasi-pullback of the Borcherds lift of a weak Jacobi form (of weight $0$)  
is the Borcherds lift of the restriction of this Jacobi form. 
See pp.16, 21, 23 of \cite{Gr10} for some examples of Gritsenko's formula. 
\end{remark}

\subsection{$\Theta$-contraction and induction}\label{ssec:theta contraction induction}

In the case where the $\rho_L$-valued form $f$ is constructed as the induction ${\ind}(\varphi)$ 
from a scalar-valued modular form $\varphi$ 
(cf.~\cite{Bo00}, \cite{Scheit06}, \cite{Scheit09}, \cite{Yo09}, \cite{Yo13}, \cite{Scheit15}), 
we can describe the $\rho_M$-valued form $\langle f{\pull}, {\ThetaK} \rangle$ more explicitly.    
Let $L'=M\oplus K(-1)$ and 
\begin{equation}\label{eqn:I = L/(M+K)}
I = L/L' \subset A_{L'} = A_M\oplus A_{K(-1)}. 
\end{equation}
This is an isotropic subgroup of $A_M\oplus A_{K(-1)}$. 
Let $G_M\subset A_{M}$ and $G_{K}\subset A_{K}$ be the images of $I$ 
by the projections $I\to A_{M}$ and $I\to A_{K(-1)}=A_{K}$, respectively. 
Nikulin shows in \cite{Ni} that these projections are injective and so $I$ is the graph of an isomorphism 
\begin{equation}\label{eqn:iota:GMtoGK}
\iota:G_M\to G_K. 
\end{equation}
This $\iota$ is an isometry because $I$ is isotropic and we take the $(-1)$-scaling $A_{K(-1)}=A_{K}$. 

We fix a natural number $d$ divisible by the level of $A_L$. 
We choose and fix representatives $\gamma_{1}, \cdots, \gamma_{a}\in{\Mp}$ 
of ${\MGd}\backslash{\Mp}$. 
For an element $\mu \in A_M$ and 
a modular form $\psi$ of weight $k'$ for \textit{some} subgroup of ${\Mp}$, we define 
\begin{equation*}
{\rm ind}^{\mu}_{M}(\psi) = 
\sum_{i=1}^{a} (\psi|_{k'}\gamma_{i}) \rho_M(\gamma_{i})^{-1}({\emu}). 
\end{equation*}
If $\mu=0$, the level of $A_M$ divides $d$, 
and $\psi$ is modular for ${\MGd}$ with character $\chi_{M}$, 
then this is the operation ${\rm ind}_M$ defined in Equation \eqref{eqn:def induction}. 
But in general this may depend on the choice of the representatives $\gamma_{1}, \cdots, \gamma_{a}$.

\begin{lemma}\label{prop:induction and theta contraction non-split case}
Let $\varphi$ be a weakly holomorphic scalar-valued modular form 
of weight $k$ and character $\chi_{L}$ for ${\MGd}$. 
Then for $L'=M\oplus K(-1)$ we have 
\begin{equation}\label{eqn:induction and theta contraction}
\langle {\ind}(\varphi){\pull}, {\ThetaK} \rangle = 
\sum_{\mu \in G_M} {\rm ind}^{\mu}_{M}(\varphi \cdot \theta_{K+\iota(\mu)}).  
\end{equation}
\end{lemma}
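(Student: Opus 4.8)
The plan is to reduce the left-hand side to the induction $\mathrm{ind}_{L'}^{I}$ on the split lattice $L'=M\oplus K(-1)$ and then push $\ThetaK$ through the sum defining it. First I would apply Lemma~\ref{lem:induction and pull push} (in the form valid at level $d_{A_L}$ recorded in the remark following it): since $\ind=\mathrm{ind}_{L}$ and $\pull=\pullA$ with $A'=A_{L'}$ and $I=L/L'$, this gives $\ind(\varphi)\pull = \mathrm{ind}_{L'}^{I}(\varphi)$, so it suffices to evaluate $\langle \mathrm{ind}_{L'}^{I}(\varphi), \ThetaK\rangle$. Unwinding Definition~\eqref{eqn:def induction} with the tensor decomposition $\rho_{L'}\simeq\rho_M\otimes\rho_{K(-1)}$ of \S\ref{ssec:split case}, and using that $I$ is the graph of $\iota$ so that $\sum_{\lambda\in I}\elambda=\sum_{\mu\in G_M}\emu\otimes\bar{\mathbf{e}}_{\iota(\mu)}$, I would write (running the coset sum over the fixed representatives $\gamma_1,\dots,\gamma_a$)
\begin{equation*}
\mathrm{ind}_{L'}^{I}(\varphi) = \sum_{\gamma}(\varphi|_{k}\gamma)\sum_{\mu\in G_M}\rho_M(\gamma)^{-1}(\emu)\otimes\rho_{K(-1)}(\gamma)^{-1}(\bar{\mathbf{e}}_{\iota(\mu)}).
\end{equation*}

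The heart of the argument is a single identity showing how the $\Theta$-contraction absorbs the factor $\rho_{K(-1)}(\gamma)^{-1}$. Since the contraction $(\C A_K)^{\vee}\otimes\C A_K\to\C$ is $\Mp$-invariant and we have the duality $\rho_{K(-1)}\simeq(\rho_K)^{\vee}$, for $\gamma=(M,\phi)$ and $\nu\in A_K$ I would establish
\begin{equation*}
\langle \rho_{K(-1)}(\gamma)^{-1}\bar{\mathbf{e}}_{\nu}, \ThetaK\rangle(\tau) = \langle \bar{\mathbf{e}}_{\nu}, \rho_K(\gamma)\ThetaK(\tau)\rangle = \phi(\tau)^{-\rk(K)}\,\theta_{K+\nu}(M\tau),
\end{equation*}
where the middle step moves $\gamma$ from the $\rho_{K(-1)}$-factor onto $\ThetaK$ by invariance of the pairing, and the last uses the transformation law $\ThetaK(M\tau)=\phi(\tau)^{\rk(K)}\rho_K(\gamma)\ThetaK(\tau)$ (weight $\rk(K)/2$) together with reading off the $\nu$-component. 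Applying the $\Theta$-contraction to the displayed expression above---it acts only on the $\C A_{K(-1)}$-tensor factor, leaving $\rho_M(\gamma)^{-1}(\emu)$ untouched---and inserting this identity with $\nu=\iota(\mu)$ yields
\begin{equation*}
\langle\mathrm{ind}_{L'}^{I}(\varphi),\ThetaK\rangle(\tau) = \sum_{\mu\in G_M}\sum_{\gamma}(\varphi|_{k}\gamma)(\tau)\,\phi(\tau)^{-\rk(K)}\theta_{K+\iota(\mu)}(M\tau)\,\rho_M(\gamma)^{-1}(\emu).
\end{equation*}

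Finally I would recombine the two automorphy factors. Writing $k'=\rk(K)/2$, the product $(\varphi|_{k}\gamma)(\tau)\,\phi(\tau)^{-2k'}\theta_{K+\iota(\mu)}(M\tau)$ equals $((\varphi\cdot\theta_{K+\iota(\mu)})|_{k+k'}\gamma)(\tau)$ directly from the definition of the slash operator, so the double sum collapses to $\sum_{\mu\in G_M}\mathrm{ind}^{\mu}_{M}(\varphi\cdot\theta_{K+\iota(\mu)})$ by the definition of $\mathrm{ind}^{\mu}_{M}$ (which uses the same representatives $\gamma_1,\dots,\gamma_a$), giving \eqref{eqn:induction and theta contraction}.

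The only genuinely nontrivial point is the contraction identity of the second paragraph: one must use the duality $\rho_{K(-1)}\simeq(\rho_K)^{\vee}$ and the modularity of $\ThetaK$ in tandem to turn $\rho_{K(-1)}(\gamma)^{-1}$ acting on a constant vector into the holomorphic factor $\phi^{-\rk(K)}\theta_{K+\nu}(M\tau)$; everything else is bookkeeping. It is worth flagging that the individual terms $\mathrm{ind}^{\mu}_{M}$ may depend on the choice of $\gamma_1,\dots,\gamma_a$, whereas their sum does not---precisely because it equals the intrinsically defined $\langle\ind(\varphi)\pull,\ThetaK\rangle$.
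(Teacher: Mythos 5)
Your proposal is correct and follows essentially the same route as the paper's own proof: reduce to $\mathrm{ind}_{L'}^{I}(\varphi)$ via Lemma \ref{lem:induction and pull push} (valid at level $d$ by the remark following that lemma), write it out with the tensor decomposition over $G_M$, and use the $\Mp$-invariance of the contraction plus the modularity of ${\ThetaK}$ to obtain $\langle \rho_{K}^{\vee}(\gamma)^{-1}\bar{\mathbf{e}}_{\lambda}, {\ThetaK} \rangle = \theta_{K+\lambda}|_{\kappa}\gamma$ with $\kappa={\rm rk}(K)/2$, then recombine the slash factors into $(\varphi\cdot\theta_{K+\iota(\mu)})|_{k+\kappa}\gamma$. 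The only difference is notational (you expand the automorphy factors $\phi(\tau)^{-{\rm rk}(K)}\theta_{K+\nu}(M\tau)$ where the paper keeps the slash notation), and your closing observation about independence of the choice of representatives is exactly the remark the paper makes after the proof.
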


\begin{proof}
By Lemma \ref{lem:induction and pull push} we have 
\begin{equation*}
{\ind}(\varphi){\pull}
 =   
{\rm ind}_{L'}^{I}(\varphi) 
 =  
\sum_{i=1}^{a}(\varphi|_{k}\gamma_{i}) \sum_{\mu \in G_M}
(\rho_M(\gamma_{i})^{-1}{\emu} )\otimes (\rho_{K}^{\vee}(\gamma_{i})^{-1}\bar{{\e}}_{\iota(\mu)}). 
\end{equation*}
Hence   
\begin{equation*}
\langle {\ind}(\varphi){\pull}, \, {\ThetaK} \rangle = 
\sum_{i=1}^{a} (\varphi|_{k}\gamma_{i}) \sum_{\mu \in G_M} 
(\rho_M(\gamma_{i})^{-1}{\emu}) \cdot 
\langle \rho_{K}^{\vee}(\gamma_{i})^{-1}\bar{{\e}}_{\iota(\mu)}, \, {\ThetaK} \rangle. 
\end{equation*}
By the modularity of ${\ThetaK}$, we have  
\begin{equation*}
\langle \rho_{K}^{\vee}(\gamma)^{-1}\bar{{\e}}_{\lambda},  {\ThetaK} \rangle = 
\langle \bar{{\e}}_{\lambda}, \, \rho_{K}(\gamma){\ThetaK} \rangle = 
\langle \bar{{\e}}_{\lambda},   {\ThetaK}|_{\kappa}\gamma \rangle = 
\theta_{K+\lambda}|_{\kappa}\gamma 
\end{equation*}
for every $\gamma\in{\Mp}$ and $\lambda\in A_{K}$, 
where $\kappa={\rm rk}(K)/2$. 
It follows that  
\begin{eqnarray*}
\langle {\ind}(\varphi){\pull}, \: {\ThetaK} \rangle 
& = & 
\sum_{i=1}^{a}(\varphi|_{k}\gamma_{i}) 
\sum_{\mu \in G_M}(\theta_{K+\iota(\mu)}|_{\kappa}\gamma_{i}) \rho_M(\gamma_{i})^{-1}{\emu}  \\ 
& = & 
\sum_{\mu \in G_M} {\rm ind}^{\mu}_M(\varphi \cdot \theta_{K+\iota(\mu)}).     
\end{eqnarray*}
\end{proof}

Equation \eqref{eqn:induction and theta contraction} implies that 
the sum in the right hand side does not depend on the choice of $\gamma_{1}, \cdots , \gamma_{a}$. 

By Lemma \ref{prop:induction and theta contraction non-split case} 
and Theorem \ref{main} we obtain the following. 

\begin{proposition}\label{prop:induction and theta contraction}
Let $\varphi$ be a scalar-valued weakly holomorphic modular form 
of weight $1-b/2$ and character $\chi_L$ for ${\MGd}$, 
such that ${\ind}(\varphi)$ has integral principal part and $c_0(0)\in2{\Z}$. 
Then 
\begin{equation*}
\Psi_L({\ind}(\varphi))||_{{\DM}} = 
\Psi_M \left( \sum_{\mu\in G_M} 
{\rm ind}^{\mu}_{M}(\varphi \cdot \theta_{K+\iota(\mu)}) \right). 
\end{equation*}
\end{proposition}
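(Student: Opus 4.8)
The plan is to obtain this Proposition as an immediate consequence of Theorem~\ref{main} and Lemma~\ref{prop:induction and theta contraction non-split case}, specialized to the form $f = {\ind}(\varphi)$. Since both of these results are already established, the strategy is purely one of assembling them: apply Theorem~\ref{main} to rewrite the quasi-pullback as a Borcherds lift of a $\Theta$-contraction, and then use Lemma~\ref{prop:induction and theta contraction non-split case} to express that $\Theta$-contraction as the sum of twisted inductions appearing on the right-hand side.

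First I would check that $f = {\ind}(\varphi)$ meets the hypotheses of Theorem~\ref{main}. Because $\varphi$ is a scalar-valued weakly holomorphic modular form of weight $1-b/2$ and character $\chi_L$ for ${\MGd}$, the induction ${\ind}(\varphi)$ is, by the construction recalled in \S\ref{ssec:operations}, a weakly holomorphic modular form of weight $1-b/2$ and type $\rho_L$ for ${\Mp}$. The two remaining conditions---integral principal part and $c_0(0)\in 2{\Z}$---are precisely what we have assumed for ${\ind}(\varphi)$ in the statement. Hence Theorem~\ref{main} applies and gives, up to a multiplicative constant,
\begin{equation*}
\Psi_L({\ind}(\varphi))||_{{\DM}} = \Psi_M(\langle {\ind}(\varphi){\pull}, {\ThetaK} \rangle).
\end{equation*}
Next I would substitute the identity of Lemma~\ref{prop:induction and theta contraction non-split case}, taken at weight $k = 1-b/2$, namely
\begin{equation*}
\langle {\ind}(\varphi){\pull}, {\ThetaK} \rangle = \sum_{\mu \in G_M} {\rm ind}^{\mu}_{M}(\varphi \cdot \theta_{K+\iota(\mu)}),
\end{equation*}
into the argument of $\Psi_M$ above. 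This produces exactly the asserted formula.

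Given the two prior results, there is no genuine obstacle to overcome: the Proposition is a formal corollary. The only points deserving a moment's care are bookkeeping ones. I would confirm that the weight is consistent throughout---that ${\ind}(\varphi){\pull}$ still has weight $1-b/2$ (pullback preserves weight by Corollary~\ref{cor:pull push modular}), so that by Lemma~\ref{lem:quasipullback WD} the $\Theta$-contraction $\langle {\ind}(\varphi){\pull}, {\ThetaK} \rangle$ has the weight $1-b'/2$ demanded by the lift $\Psi_M$, where $M$ has signature $(2, b')$. I would also note that, although each individual ${\rm ind}^{\mu}_{M}$ may depend on the chosen coset representatives $\gamma_1, \dots, \gamma_a$, the total sum over $\mu \in G_M$ does not, as already observed after Lemma~\ref{prop:induction and theta contraction non-split case}; this is what makes the right-hand side well defined as the input of a Borcherds lift.
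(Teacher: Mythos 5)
Your proposal is correct and follows exactly the paper's own argument: the Proposition is stated there as an immediate consequence of Lemma~\ref{prop:induction and theta contraction non-split case} and Theorem~\ref{main}, which is precisely the assembly you carry out. Your additional bookkeeping (hypothesis check for ${\ind}(\varphi)$, weight consistency, and independence of the coset representatives) only makes explicit what the paper leaves implicit.
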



When $L=M\oplus K(-1)$, we have $I=\{ 0 \}$, 
so Equation \eqref{eqn:induction and theta contraction} takes the simple form 
\begin{equation*}
\langle {\ind}(\varphi), {\ThetaK} \rangle = {\rm ind}_{M}(\varphi \cdot \theta_{K}).  
\end{equation*}
Hence Proposition \ref{prop:induction and theta contraction} is simplified as follows. 

\begin{corollary}\label{cor:induction and theta contraction split case}
When $L$ splits as $M\oplus K(-1)$, we have  
\begin{equation*}
\Psi_L({\ind}(\varphi))||_{{\DM}} = \Psi_M({\rm ind}_{M}(\varphi \cdot \theta_{K})). 
\end{equation*}
\end{corollary}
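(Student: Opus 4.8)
The plan is to obtain the statement as the degenerate case $L'=L$ of Proposition~\ref{prop:induction and theta contraction}. First I would take $L'=M\oplus K(-1)$ to coincide with $L$ itself, so that the pullback $\pull$ is the identity and the isotropic subgroup $I=L/L'$ of Equation~\eqref{eqn:I = L/(M+K)} is trivial. Its images $G_M\subset A_M$ and $G_K\subset A_K$ under the two projections are then both $\{0\}$, and the isometry $\iota$ of Equation~\eqref{eqn:iota:GMtoGK} carries $0$ to $0$. Feeding this into Lemma~\ref{prop:induction and theta contraction non-split case}, the sum on the right-hand side of Equation~\eqref{eqn:induction and theta contraction} collapses to the single term indexed by $\mu=0$, so that
\[
\langle {\ind}(\varphi), \ThetaK \rangle = {\rm ind}^{0}_{M}(\varphi\cdot\theta_{K+\iota(0)}) = {\rm ind}^{0}_{M}(\varphi\cdot\theta_{K}).
\]

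It then remains to identify ${\rm ind}^{0}_{M}$ with the genuine induction ${\rm ind}_{M}$. By the remark following the definition of ${\rm ind}^{\mu}_{M}$, this is legitimate provided $\varphi\cdot\theta_{K}$ is modular of character $\chi_{M}$ for $\MGd$; the level of $A_M$ divides that of $A_L$ and hence $d$, so the only point to check is the character. Here I would track the three relevant characters. The splitting $A_L=A_M\oplus A_{K(-1)}$ gives $\rho_L\simeq\rho_M\otimes\rho_{K(-1)}$ with vacuum vector $\mathbf{e}_0\otimes\bar{\mathbf{e}}_0$, and evaluating $\rho_L(\gamma)(\mathbf{e}_0\otimes\bar{\mathbf{e}}_0)$ for $\gamma\in\MGd$ yields $\chi_L=\chi_M\cdot\chi_{K(-1)}$. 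The scalar component $\theta_K=\theta_{K+0}$ of $\ThetaK$ transforms with character $\chi_K$, which follows by extracting the $\mathbf{e}_0$-coefficient of $\rho_K(\gamma)\ThetaK$ and using unitarity of $\rho_K$ together with $\rho_K(\gamma)\mathbf{e}_0=\chi_K(\gamma)\mathbf{e}_0$. Finally the identification $\rho_{K(-1)}\simeq(\rho_K)^{\vee}$ recorded in \S\ref{ssec:split case} gives $\chi_{K(-1)}=\chi_K^{-1}$. Combining these, $\varphi\cdot\theta_K$ carries character $\chi_L\cdot\chi_K=\chi_M\cdot\chi_{K(-1)}\cdot\chi_K=\chi_M$, and its weight is $(1-b/2)+{\rk}(K)/2=1-b'/2$ with $b'={\rk}(M)-2$; both are exactly what is needed for ${\rm ind}_M(\varphi\cdot\theta_K)$ and its Borcherds lift.

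With the collapsed sum rewritten as ${\rm ind}_M(\varphi\cdot\theta_K)$, substituting into Proposition~\ref{prop:induction and theta contraction} gives $\Psi_L({\ind}(\varphi))||_{\DM}=\Psi_M({\rm ind}_M(\varphi\cdot\theta_K))$. The argument is almost entirely formal; the one genuine obstacle is the character bookkeeping of the previous paragraph, in particular confirming $\chi_{K(-1)}=\chi_K^{-1}$ and that the diagonal theta component $\theta_K$ carries precisely $\chi_K$, so that the factor $\chi_K$ introduced by $\theta_K$ cancels the $\chi_K^{-1}$ sitting inside $\chi_L$ and leaves exactly $\chi_M$.
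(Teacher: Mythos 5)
Your proposal is correct and is essentially the paper's own argument: both deduce the corollary by specializing Lemma~\ref{prop:induction and theta contraction non-split case} and Proposition~\ref{prop:induction and theta contraction} to the split case, where $I=\{0\}$ and the sum on the right-hand side of Equation~\eqref{eqn:induction and theta contraction} collapses to the single term $\mu=0$. Your character bookkeeping (showing $\chi_{K(-1)}=\chi_{K}^{-1}$, so that $\varphi\cdot\theta_{K}$ has character $\chi_{M}$ and ${\rm ind}^{0}_{M}$ agrees with the genuine induction ${\rm ind}_{M}$) simply makes explicit a verification that the paper passes over silently.
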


\subsection{Examples}\label{ssec:example}

We discuss a few examples. 
In what follows, $L$ is always an even lattice of signature $(2, b)$, 
$K(-1)$ is a primitive negative-definite sublattice of $L$, 
$M=K(-1)^{\perp}\cap L$ 
(satisfying the assumption about its rank and Witt index 
appearing in the beginning of \S \ref{sec:quasi-pullback}), 
and $L'=M\oplus K(-1)$. 
We use the notation  
$I=L/L'$ and $\iota:G_M\to G_K$ 
from Equations \eqref{eqn:I = L/(M+K)} and \eqref{eqn:iota:GMtoGK}. 

\begin{example}\label{ex1}
Consider the case where $G_{K}\subset A_{K}$ is nondegenerate. 
We have the orthogonal decompositions 
$A_{K}=G_{K}\oplus G_{K}^{\perp}$ and 
$A_{M}=G_{M}\oplus G_{M}^{\perp}$. 
Therefore 
$I^{\perp}=G_{M}^{\perp} \oplus G_{K}^{\perp}(-1) \oplus I$, 
so that 
$A_{L}\simeq G_{M}^{\perp}\oplus G_{K}^{\perp}(-1)$ and 
$I^{\perp} \simeq A_{L}\oplus I$. 
This implies that 
${\C}A_{L}\simeq {\C}G_{M}^{\perp}\otimes ({\C}G_{K}^{\perp})^{\vee}$ and 
\begin{equation*}
{\C}A_{L'} 
\simeq 
{\C}A_{L} \otimes {\C}G_{M} \otimes ({\C}G_{K})^{\vee} 
\simeq 
{\C}A_{L} \otimes {\rm End}({\C}G_{M}). 
\end{equation*} 
Under this isomorphism, the pullback ${\pull}$ is given by 
\begin{equation*}
{\rm id}_{{\C}A_{L}} \otimes 
\left( \sum_{\mu \in G_{M}} {\e}_{\mu}\otimes \bar{{\e}}_{\iota(\mu)} \right) 
\: \: : \: \:  
{\C}A_{L} \to {\C}A_{L'} \simeq {\C}A_{L}\otimes {\C}G_{M} \otimes ({\C}G_{K})^{\vee}.  
\end{equation*}
Note that the vector 
$\sum_{\mu \in G_{M}} {\e}_{\mu}\otimes \bar{{\e}}_{\iota(\mu)}$ 
corresponds to the identity of ${\C}G_{M}$ 
under the isomorphism 
${\C}G_{M}\otimes ({\C}G_{K})^{\vee} \simeq {\rm End}({\C}G_{M})$. 

Let $f$ be a weakly holomorphic modular form of type $\rho_L$.
By the isomorphism 
${\C}A_{L}\simeq {\C}G_{M}^{\perp}\otimes({\C}G_{K}^{\perp})^{\vee}$, 
we can write 
$f=\sum_{\nu \in G_{K}^{\perp}} f_{\nu}\otimes \bar{{\e}}_{\nu}$ 
with $f_{\nu}$ being a ${\C}G_{M}^{\perp}$-valued function. 
Then  
\begin{equation*}
f{\pull} = 
\sum_{\nu\in G_{K}^{\perp}} f_{\nu} \otimes \bar{{\e}}_{\nu} 
\otimes \left( \sum_{\mu \in G_{M}} {\e}_{\mu}\otimes \bar{{\e}}_{\iota(\mu)} \right).   
\end{equation*}
If we denote by 
$\pi \colon A_{K}\to G_{K}^{\perp}$ and 
$\pi'\colon A_{K}\to G_{K}\simeq G_{M}$ 
the natural projections, then  
\begin{equation}\label{eqn: ex1}
\langle f{\pull}, \Theta_{K} \rangle = 
\sum_{\lambda \in A_{K}} \theta_{K+\lambda} \cdot f_{\pi(\lambda)}\otimes {\e}_{\pi'(\lambda)}. 
\end{equation}
Therefore Theorem \ref{main} takes the form 
\begin{equation*}
\Psi_L(f)||_{{\DM}} = 
\Psi_M \left( \sum_{\lambda\in A_{K}} \theta_{K+\lambda} 
\cdot f_{\pi(\lambda)}\otimes {\e}_{\pi'(\lambda)} \right). 
\end{equation*}
\end{example}

\begin{example}\label{ex2}
As a special case of Example \ref{ex1}, assume that $G_{K}=A_{K}$. 
Then $G_{K}^{\perp}$ is trivial, $A_{L}\simeq G_{M}^{\perp}$, 
and $A_{M}=G_{M}^{\perp}\oplus G_{M} \simeq A_{L} \oplus A_{K}$. 
Hence ${\C}A_{M}\simeq {\C}A_{L}\otimes {\C}A_{K}$. 
Under this isomorphism, Equation \eqref{eqn: ex1} is simplified to 
\begin{equation}\label{eqn: ex2}
\langle f{\pull}, \Theta_{K} \rangle = f\otimes \Theta_{K}, 
\end{equation}
so Theorem \ref{main} takes the form    
\begin{equation*}
\Psi_L(f)||_{{\DM}} = \Psi_M(f\otimes \Theta_{K}). 
\end{equation*}
\end{example}
 
We shall look at two further special cases of Example \ref{ex2}: 
when $G_M=A_M$ and when $A_K= \{ 0 \}$. 

\begin{example}\label{ex3}
Consider the case where $L$ is unimodular. 
Nikulin shows in \cite{Ni} that  
$G_{K}=A_{K}$ and $G_{M}=A_{M}$ in this case. 
The modular form $f$ is scalar-valued because $A_{L}$ is trivial. 
Equation \eqref{eqn: ex2} is simplified to 
$\langle f{\pull}, {\ThetaK} \rangle = f\cdot {\ThetaK}$ 
where we identify $\rho_{M}\simeq \rho_{K}$ by $\iota$. 
Hence Theorem \ref{main} takes the form  
\begin{equation*}\label{eqn:quasipullback unimo}
\Psi_L(f)||_{{\DM}} = \Psi_M(f\cdot {\ThetaK}). 
\end{equation*}
This formula has been known to the experts, 
especially when $(L, f)=(II_{2,26}, 1/\Delta)$.  
See \S 16 of \cite{Bo95}, Theorem 8.5 of \cite{Yo98}, and Remark 1 in \S 6 of \cite{G-H-S07}. 
In Theorem 13.1 of \cite{Bo95}, 
Borcherds already proves that 
the weight of $\Psi_L(f)||_{{\DM}}$ equals the constant term of $f\cdot\theta_K$ 
for $M=U\oplus \langle 2d \rangle$. 
\end{example}

\begin{example}\label{ex4} 
When $K$ is unimodular, we have $G_{K}=A_{K}=\{ 0 \}$ and $L=L'$. 
The theta series $\Theta_{K}=\theta_{K}$ is scalar-valued.  
Then $\langle f{\pull}, {\ThetaK} \rangle$ is just $f\cdot \theta_K$ 
where we identify $\rho_M\simeq \rho_L$ naturally. 
Hence  
\begin{equation*}
\Psi_L(f)||_{{\DM}} = \Psi_M(f\cdot \theta_K). 
\end{equation*}
This is considered in Lemma 8.1 of \cite{Bo98}  
(see also the proof of Theorem 7.3.2 in \cite{H-MP}). 
\end{example}

\begin{example}\label{ex5}
An even lattice $L$ is called \textit{$2$-elementary} 
when $A_L\simeq({\Z}/2)^a$ for some $a\geq0$. 
Its parity $\delta$ is defined by 
$\delta=0$ if $2q(\lambda)=0\in{\QZ}$ for all $\lambda \in A_L$, 
and $\delta=1$ otherwise. 
Nikulin shows in \cite{Ni} that the isometry class of a 2-elementary lattice $L$ of signature $(2, b)$ 
is determined by the triplet $(b, a, \delta)$. 
We must have $a \equiv b$ mod $2$, 
and $\delta=0$ is possible only when $a, b$ are even. 
We write $L=L_{b,a,\delta}$ and ${\DL}=\mathcal{D}_{b,a,\delta}$ to specify these invariants. 
Then 
\begin{equation*}\label{eqn:2-elementary graph}
L_{b+r, a+r, 1} \simeq L_{b, a, \delta}\oplus \langle -2 \rangle^{\oplus r} 
\end{equation*}
for every $r>0$. 
In particular, we have a natural embedding 
\begin{equation*}
\mathcal{D}_{b,a,\delta} \simeq 
(\langle -2 \rangle^{\oplus r})^{\perp} \cap \mathcal{D}_{b+r,a+r,1} 
\hookrightarrow \mathcal{D}_{b+r,a+r,1}.  
\end{equation*} 
Moreover, if $a<a'$ with $a \equiv a' \equiv b$ mod $2$, 
there is an embedding 
$L_{b, a', \delta}\hookrightarrow L_{b, a, \delta}$ 
of finite-index. 

In \cite{Yo13}, Yoshikawa constructed a series of Borcherds products $\Psi_{b,a,\delta}$ for 
$2$-elementary lattices $L_{b,a,\delta}$ with $b\leq10$ 
which describe the analytic torsion of $K3$ surfaces with involutions. 
They are defined in Theorem 7.7 of \cite{Yo13} 
as the Borcherds lifts of the $\rho_{L_{b,a,\delta}}$-valued modular forms  
\begin{equation*}
f_{b,a,\delta} = {\rm ind}_{L_{b,a,\delta}}(\eta_{1^{-8}2^{8}4^{-8}}\theta_{\langle 2 \rangle}^{10-b}), 
\end{equation*}
where $\eta_{1^{-8}2^{8}4^{-8}}(\tau)$ is the eta product 
$\eta(\tau)^{-8}\eta(2\tau)^{8}\eta(4\tau)^{-8}$ and 
$\theta_{\langle 2 \rangle}(\tau)$ is the scalar-valued theta series of 
the lattice $\langle 2 \rangle$.  
Corollary \ref{cor:induction and theta contraction split case} tells us that we have the quasi-pullback relation 
\begin{equation*}
\Psi_{b,a,\delta} = \Psi_{b+r,a+r,1}||_{\mathcal{D}_{b,a,\delta}}, 
\end{equation*}
as was observed by Yoshikawa.  
See also p.18 of \cite{Gr10} for the case $a=b+2$.  
The modular forms $\Psi_{b,a,\delta}$ in $b<10$ 
are thus generated from the forms $\Psi_{10,a,1}$ in the line $(b, \delta)=(10, 1)$ by quasi-pullback. 

Next, for a fixed $b$ and $a<a'$ with $b \equiv a \equiv a'$ mod $2$, 
we see from Lemma \ref{lem:induction and pull push} that
\begin{equation*}
f_{b,a,\delta} = f_{b,a',\delta}{\push}, \qquad  L=L_{b,a,\delta}, \: \; L'=L_{b,a',\delta}.
\end{equation*}
Hence, as explained in Remark \ref{rmk: regularized push} below, 
$\Psi_{b,a,\delta}$ can be obtained from $\Psi_{b,a',\delta}$ 
by a sort of ``regularized average product''. 
In this sense, Yoshikawa's modular forms have two origins, 
$\Psi_{10,12,0}$ and $\Psi_{10,12,1}$. 
In \cite{Yo09}, \cite{Yo13}, he shows that 
$\Psi_{10,12,0}$ is a constant function, 
$\Psi_{10,10,0}$ is the Borcherds form $\Phi_4$ defined in \cite{Bo96}, and 
$\Psi_{10,12,1}$ is essentially the square of $\Phi_{4}$. 
\end{example}

\begin{remark}\label{rmk: regularized push}
In contrast to ${\pull}$ (Lemma \ref{lem:pull on Borcherds product}), 
the effect of the operation ${\push}$ on Borcherds products seems to be not so simple. 
It sends a Borcherds product $\Psi_{L'}$ on $\mathcal{D}_{L'}$ with 
\begin{equation*}
2{\rm div}(\Psi_{L'}) = \sum_{\mu \in A_{L'}}\sum_{n} c_{\mu}(n)Z(\mu, n)_{L'}
\end{equation*}
to a Borcherds product $\Psi_{L}$ on ${\DL}=\mathcal{D}_{L'}$ with 
\begin{eqnarray*}
2{\rm div}(\Psi_{L}) 
& = & \sum_{\mu \in I^{\perp}}\sum_{n} c_{\mu}(n)Z(p(\mu), n)_{L} \\ 
& = & \sum_{\lambda \in A_{L}}\sum_{n} \left( \sum_{\mu\in p^{-1}(\lambda)} 
c_{\mu}(n) \right) Z(\lambda, n)_{L} \\ 
& = & \sum_{\mu \in I^{\perp}}\sum_{n} \left( \sum_{\mu'\in \mu+I} 
c_{\mu'}(n) \right) Z(\mu, n)_{L'}, 
\end{eqnarray*}
where $I=L/L'\subset A_{L'}$ and $p\colon I^{\perp}\to A_L$ is the projection as before. 
This operation, a kind of regularized average product, 
may send a constant function to an interesting modular form: 
see, e.g., Examples 8.8 -- 8.12 of \cite{Yo13}. 
\end{remark}


\end{document}